\numberwithin{equation}{section}
\theoremstyle{plain}
\newtheorem{theorem}[subsection]{Theorem}
\newtheorem{proposition}[subsection]{Proposition}
\newtheorem{lemma}[subsection]{Lemma}
\newtheorem{corollary}[subsection]{Corollary}
\theoremstyle{definition}
\newcommand{\f}{\mathbb{F}_q}
\newcommand{\ff}{\mathbb{F}_p}
\newcommand{\z}{\mathbb{Z}}
\newcommand\F{\mathbb{F}_q}
\newcommand\Z{\mathbb{Z}}
\DeclareMathOperator{\sign}{sign}
\begin{document}

\title[Distinct coordinate solutions of linear equations over finite fields]{distinct coordinate solutions of linear equations over finite fields}

\author{Jiyou Li}
\address{Department of Mathematics, Shanghai Jiao Tong University, Shanghai, P.R. China}
\email{lijiyou@sjtu.edu.cn}

\author{Xiang Yu}
\address{Department of Mathematics, Shanghai Jiao Tong University, Shanghai, P.R. China}
\email{xangyu@alumni.sjtu.edu.cn}

\thanks{This work is supported by the National Science Foundation of China (11771280) and the National Science Foundation of Shanghai Municipal (17ZR1415400, 19ZR1424100). }

\subjclass{}

\begin{abstract}
Let $\f$ be the finite field of $q$ elements and $a_1,a_2,  \ldots, a_k, b\in \f$. We investigate $N_{\F}(a_1, a_2, \ldots,a_k;b)$,  the number of ordered solutions $(x_1, x_2, \ldots,x_k)\in\F^k$ of the linear equation
$$ a_1x_1+a_2x_2+\cdots+a_kx_k=b$$
with all $x_i$ distinct. We obtain an explicit formula for
 $N_{\F}(a_1,a_2, \ldots, a_k;b)$ involving combinatorial numbers depending on $a_i$'s.  In particular, we obtain closed formulas for two special cases. One is that $a_i, 1\leq i\leq k$ take at most three distinct values and the other is that $\sum_{i=1}^ka_i=0$ and $\sum_{i\in I}a_i\neq 0$ for any $I\subsetneq \{1,2,\dots,k\}$.

  The same technique works when $\f$ is replaced by $\z_n$,
  the ring of integers modulo $n$. In particular, we give a new proof for the main result given by Bibak, Kapron and Srinivasan
   (\cite{bibakgraph}), which generalizes a theorem of Sch\"{o}nemann via a graph theoretic method.
\end{abstract}

\maketitle

\section{Introduction}

Let $\F$ be the finite field of $q$ elements of characteristic $p$ and
$D$ be a subset in $\f$. Given $a_1, a_2, \ldots, a_k, b\in \F$, we are interested in the number of solutions of the linear equation over $\f$ 
\begin{equation}\label{eq:lineareq}
a_1x_1+a_2x_2+\cdots+a_kx_k=b,
\end{equation}
with  the restriction that all $x_i$ in $D$ are distinct, that is, the cardinality of the set
$$N_{D}(a_1, a_2, \dots, a_k; b)=\#\{(x_1, x_2, \ldots, x_k)\in D^k:a_1x_1+a_2x_2+\cdots+a_kx_k=b:x_i\neq x_j,\ \forall\ i\neq j\}.$$

This number is closely related to the reduced degree of a map over $\F$. Any map from $\F$ to $\F$ can be uniquely represented by a polynomial of degree at most $q-1$. The degree of such a polynomial is called the {\it reduced degree} of the map.  Suppose that the range of $f$ is $\{a_1, a_2, \ldots,a_q\}$(with multiplicity counted).  By the Lagrange interpolation formula, it is direct to check that $f$ is a polynomial of degree at most $q-2$ if and only if $\sum_{i=1}^q a_i=0$, and $f$ is a polynomial of degree at most $q-3$ if and only if $\sum_{i=1}^q a_i=0$ and $N_{\F}(a_1, a_2, \ldots,a_q;0)>0$.
In particular, it is well-known (see for example, \cite{Das}) that $N_{\F^*}(1, \omega, \omega^2, \dots, \omega^{q-2}; 0)$ counts the number of permutation polynomials
of $\text{degree}\leq q-3$ over $\f$, where $\omega$ is a primitive element of $\f$.  For more related work we refer to \cite{KP1, KP2, MW1}.

Furthermore,  $N_{D}(a_1, a_2, \dots, a_k; b)$ can be naturally regarded as a counting version of knapsack type problem over finite rings. In particular, when $a_i=1$, $1\leq i \leq k$, this is the counting version of subset sum problem, a well-known ${\bf \#P}$ problem in theoretical computer science.  It thus has many applications in coding theory and number theory.
For details we refer to \cite{cao,cheng2007deepholes,cheng2007distance,cheng2010complexity,gacs}.
Note that if all $a_i$'s lie in the prime field $\ff$, then this problem is a restricted composition problem over $\f$; see \cite{GW} for a broad generalization.

 Few results are known for arbitrary $a_i$'s,  even for special cases such as $D=\f$ or $D=\f^*$. In \cite{gacs}, G\'{a}cs et al.
proved that $N_{\F}(a_1, a_2, \dots,a_k;0)>0$ always holds except some obvious degenerate cases by using the polynomial method; later, Nagy \cite{Nagy} extended the result to cyclic groups.
Another result, proved by Li and Wan \cite{li2008}, gives a first
 explicit formula for $N_{D}(a_1, a_2, \dots,a_k;b) $ when $a_1=a_2=\cdots=a_k=1$ and $|D|\geq q-2$.
\begin{theorem}[\cite{li2008}, Theorem 1.2]\label{thm:li2008} %
	Define $v(b)=-1$ if $b\neq 0$, and $v(b)=q-1$ if $b=0$. If $p\nmid k$, then  $$ N_{\F}(\underbrace{1,\dots,1}_{k};b)=\frac{1}{q}(q)_k,$$
	and if $p\mid k$, then
	$$ N_{\F}(\underbrace{1,\dots,1}_{k};b)=\frac{1}{q}(q)_k+(-1)^{k+k/p}\frac{v(b)}{q}k!\binom{q/p}{k/p}.$$
\end{theorem}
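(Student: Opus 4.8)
The plan is to pass from ordered solutions to subsets and then detect the sum condition with additive characters of $\F$. Since every coefficient equals $1$, the value $x_1+\cdots+x_k$ depends only on the unordered set $\{x_1,\dots,x_k\}$, and each $k$-element subset of $\F$ whose elements sum to $b$ accounts for exactly $k!$ distinct-coordinate ordered solutions. Hence it suffices to show that the number $A_k(b)$ of $k$-subsets $S\subseteq\F$ with $\sum_{s\in S}s=b$ equals $\frac1q\binom qk$ when $p\nmid k$, and $\frac1q\binom qk+(-1)^{k+k/p}\frac{v(b)}{q}\binom{q/p}{k/p}$ when $p\mid k$; multiplying by $k!$ and using $k!\binom qk=(q)_k$ will then give the two displayed formulas.

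To compute $A_k(b)$, I would fix a nontrivial additive character $\psi$ of $\F$, write the characters as $\psi_t=\psi(t\,\cdot)$, and apply orthogonality to obtain $A_k(b)=\frac1q\sum_{t\in\F}\bar\psi(tb)\,e_k(t)$, where $e_k(t)$ is the $k$-th elementary symmetric function of the $q$ numbers $\{\psi(tc):c\in\F\}$. These symmetric functions are packaged by the generating identity $\prod_{c\in\F}\bigl(1+z\,\psi(tc)\bigr)=\sum_{k\ge 0}e_k(t)\,z^k$, so the problem reduces to extracting the coefficient of $z^k$ in this product for each $t$.

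The key structural input is that the product depends on $t$ only through whether $t=0$. For $t=0$ every factor is $1+z$, giving $(1+z)^q$ and $[z^k](1+z)^q=\binom qk$. For $t\neq0$ the map $c\mapsto\psi(tc)$ is a surjective homomorphism from $(\F,+)$ onto the group of $p$-th roots of unity whose kernel has index $p$, so each $p$-th root of unity is hit exactly $q/p$ times; thus the product equals $\bigl(\prod_{\zeta^p=1}(1+z\zeta)\bigr)^{q/p}$, independent of the particular $t\neq0$. Evaluating the inner product through the factorization of $T^p-1$ gives $\prod_{\zeta^p=1}(1+z\zeta)=1-(-1)^pz^p$, whence the coefficient of $z^k$ in $(1-(-1)^pz^p)^{q/p}$ vanishes unless $p\mid k$, in which case it equals $(-1)^{k+k/p}\binom{q/p}{k/p}$. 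The main obstacle is precisely this pair of computations—the equidistribution of $\psi(tc)$ over the $p$-th roots of unity and the clean closed form for $\prod_{\zeta^p=1}(1+z\zeta)$ with its correct sign—since everything else is bookkeeping.

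Finally I would assemble the pieces: because the contributions from $t\neq0$ are all equal, they factor out against $\sum_{t\neq0}\bar\psi(tb)=v(b)$ (which is $q-1$ if $b=0$ and $-1$ otherwise, matching the definition of $v(b)$). Substituting the two coefficient computations into $A_k(b)=\frac1q\bigl[\binom qk+v(b)\cdot[z^k](1-(-1)^pz^p)^{q/p}\bigr]$ yields $\frac1q\binom qk$ when $p\nmid k$ and the extra term $(-1)^{k+k/p}\frac{v(b)}{q}\binom{q/p}{k/p}$ when $p\mid k$; multiplying through by $k!$ recovers Theorem~\ref{thm:li2008} exactly.
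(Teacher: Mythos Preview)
Your argument is correct. The paper does not give a standalone proof of Theorem~\ref{thm:li2008} (it is quoted from \cite{li2008}), but its framework recovers the result as the special case $a_1=\cdots=a_k=1$ of Theorem~\ref{thm:summation}: in that case $p(1,\dots,1;k,i)$ counts permutations in $S_k$ whose every cycle length is divisible by $p$, which vanishes identically when $p\nmid k$, and for $p\mid k$ Lemma~\ref{lem:p(k,i)} gives $\sum_i(-1)^{k-i}p(k,i)q^i=(-1)^{k+k/p}k!\binom{q/p}{k/p}$.

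Your route is genuinely different. Rather than sieving over $S_k$ and classifying permutations by cycle type, you exploit the fact that all coefficients are equal to pass to unordered $k$-subsets, then detect the sum condition with additive characters and package the resulting elementary symmetric sums into the product $\prod_{c\in\F}(1+z\psi(tc))$, which factors cleanly through the $p$-th roots of unity. This is shorter and more self-contained for the all-ones case, and the sign bookkeeping you do via $1-(-1)^p z^p$ is clean. What the paper's sieve buys is generality: the permutation decomposition in Theorem~\ref{thm:summation} works for arbitrary coefficient tuples $(a_1,\dots,a_k)$, where there is no reduction to unordered subsets and the character-sum generating function no longer collapses to a single closed product.
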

Along this way, Li and Wan gave a series of asymptotic estimates
on $N_{D}({1, 1, \ldots, 1};b)$ for many different kinds of $D\subseteq \f$; see for example \cite{li2013, li2015, li2018}.

In this paper, we first prove that apart from some exceptions that can be classified the linear equation $a_1x_1+a_2x_2+\cdots+a_kx_k=1$ always has a solution with distinct coordinates.
\begin{theorem}\label{thm:existence}
	Suppose $q\geq 3$ and $k\leq q$. Then the linear equation $a_1x_1+a_2x_2+\cdots+a_kx_k=1$ has a solution $(x_1,x_2,\ldots,x_k)\in \F^k$ with all $x_i$ distinct, unless one of the following holds: {\rm (i)} $k<q$ and $a_1=a_2=\cdots=a_k=0${\rm ;} {\rm (ii)} $k=q$ and $a_1=a_2=\cdots=a_k=a$ for an element $a\in\F$.
\end{theorem}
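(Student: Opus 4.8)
The plan is to exploit that the target $1$ is nonzero by means of a scaling trick, which reduces the entire problem to producing a single injective tuple whose weighted sum is merely nonzero. Precisely, if $(x_1,\dots,x_k)\in\F^k$ has distinct coordinates and $c:=\sum_{i=1}^k a_ix_i\neq 0$, then $(c^{-1}x_1,\dots,c^{-1}x_k)$ again has distinct coordinates, since multiplication by $c^{-1}\in\F^*$ is a bijection of $\F$, and its weighted sum is $c^{-1}c=1$. So I would first reduce Theorem~\ref{thm:existence} to the claim that, unless we are in case (i) or (ii), the set $T:=\{\sum_i a_ix_i:\ (x_1,\dots,x_k)\text{ has distinct coordinates}\}$ contains a nonzero element. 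Because $k\le q$, injective tuples exist, so $T\neq\emptyset$; the task is then to rule out $T=\{0\}$.

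I would argue by contraposition: assume $T=\{0\}$ and show we must be in case (i) or (ii). The case $k=1$ is immediate, since $a_1x_1\equiv 0$ forces $a_1=0$, which is case (i) as $q\ge 3$. For $k\ge 2$ the main device is a transposition argument. Given any two positions $i\neq j$ and any two distinct values $u\neq v$ in $\F$, the bound $k\le q$ lets me complete $u,v$ to an injective tuple $x$ with $x_i=u,\ x_j=v$ by filling the remaining $k-2$ coordinates with distinct elements of $\F\setminus\{u,v\}$, which is possible as $q-2\ge k-2$. Swapping $x_i$ and $x_j$ produces a second injective tuple, and subtracting the two relations $\sum_\ell a_\ell x_\ell=0$ yields $(a_i-a_j)(u-v)=0$, hence $a_i=a_j$. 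Since $i,j$ are arbitrary, all the $a_i$ coincide, say $a_i=a$ for every $i$.

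It then remains to analyze the all-equal case, where the weighted sum is $a\sum_i x_i$. If $a=0$ we are in case (i) when $k<q$ and in case (ii) when $k=q$. If $a\neq 0$ and $k=q$, every injective tuple is a bijection onto $\F$, so $\sum_i x_i=\sum_{t\in\F}t=0$ because $q\ge 3$, which confirms $T=\{0\}$ and is exactly case (ii). Finally, if $a\neq 0$ and $k<q$, I would exhibit an injective tuple with $\sum_i x_i\neq 0$, contradicting $T=\{0\}$: starting from any injective tuple with value set $V$, replacing a used value $s\in V$ by an unused value $t\in\F\setminus V$ (nonempty since $k<q$) shifts $\sum_i x_i$ by $t-s\neq 0$, so the two tuples cannot both have zero sum. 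This exhausts every case. The only delicate points are bookkeeping rather than conceptual: one must check the elementary identity $\sum_{t\in\F}t=0$ for $q\ge 3$, which fails for $q=2$ and thereby explains the hypothesis, and verify the counting inequalities $q-2\ge k-2$ and $|\F\setminus V|\ge 1$ that make the transposition and replacement moves available. The real content is the opening scaling reduction, which also has the pleasant feature of sidestepping any appeal to the explicit formula of Theorem~\ref{thm:li2008}.
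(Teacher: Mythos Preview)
Your proof is correct and shares its core with the paper's: both use the scaling observation (which the paper packages as the second half of Lemma~\ref{lem:NFq}) to reduce from ``$=1$'' to ``$\neq 0$'', and both then force all $a_i$ equal via the identical transposition-and-subtract argument. The one structural difference is in how $k<q$ is handled. The paper pads the coefficient list with $q-k$ zeros and reduces outright to the case $k=q$, noting that $N_{\F}(a_1,\dots,a_k;1)=0$ iff $N_{\F}(a_1,\dots,a_k,0,\dots,0;1)=0$; after padding, ``all $a_i$ equal'' automatically means $a=0$, so the residual case $a\neq 0,\ k<q$ simply never arises. You instead work directly with arbitrary $k$ and dispatch that case by your replacement move. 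Both are equally elementary; the padding trick is marginally tidier in that it collapses the final case split, while your version keeps the argument self-contained without auxiliary variables.
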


We remark that this theorem together with Theorem 1.2 in \cite{gacs} allows us to characterize when a linear equation $a_1x_1+a_2x_2+\cdots+a_kx_k=b$ has a solution with distinct coordinates, i.e., when $N_{\F}(a_1,a_2, \ldots,a_k;b)>0$.

Next we obtain a recurrence formula for $N_{\F}(a_1,a_2, \ldots,a_k;b)$. 

\begin{theorem} \label{thm:recurrence}
If $\sum_{i=1}^k a_i\neq 0$, then
	$$ N_{\F}(a_1,a_2, \ldots,a_k;b)=\frac{1}{q}(q)_k.$$
	If $\sum_{i=1}^k a_i=0$, then
	$$N_{\F}(a_1,a_2, \ldots,a_k;b)=\frac{1}{q}(q)_k-\frac{v(b)}{q}\left( d(a_1,a_2, \ldots,a_k)+\sum_{i=1}^k d(a_1, a_2, \ldots,a_{i-1},\widehat{a_i},a_{i+1},\dots,a_k)\right),$$
	where $v(b)=-1$ if $b\neq 0$, and $v(b)=q-1$ if $b=0$;  the hat denotes the omission of an element, and $d(a_1,a_2, \ldots,a_k)$ satisfies
	\begin{equation*}
	d(a_1,a_2, \ldots,a_k)=\begin{cases}
	-v(a_1), & \text{if}\ \ k=1;\\
	-\sum_{i=1}^k d(a_1, a_2, \ldots,a_{i-1},\widehat{a_i},a_{i+1},\dots,a_k),& \text{if}\ k\geq 2\ \text{and}\ \sum_{i=1}^k a_i\neq 0;\\
	(q-k)d(a_1, a_2, \ldots,a_{i-1},\widehat{a_i},a_{i+1},\dots,a_k),&  \text{if}\ k\geq 2\ \text{and}\ \sum_{i=1}^k a_i=0.
	\end{cases}
	\end{equation*}
\end{theorem}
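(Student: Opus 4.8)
The plan is to pass to additive characters, collapse the whole count into a single character sum over distinct tuples, and then extract the recurrence combinatorially. Fix a nontrivial additive character $\psi$ of $\F$ and set
\[
D(c_1,\dots,c_k):=\sum_{\substack{x_1,\dots,x_k\in\F\\ \text{distinct}}}\psi\Big(\sum_{i=1}^k c_ix_i\Big),
\]
with $D(\,)=1$ for the empty tuple. Expanding the indicator of $\sum_i a_ix_i=b$ as $\tfrac1q\sum_{t\in\F}\psi(t(\sum_i a_ix_i-b))$ and summing over distinct tuples gives
\[
N_\F(a_1,\dots,a_k;b)=\tfrac1q(q)_k+\tfrac1q\sum_{t\neq0}\psi(-tb)\,D(ta_1,\dots,ta_k),
\]
the $t=0$ term contributing $(q)_k$. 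Two symmetries of $D$ do the heavy lifting. Replacing $x_i$ by $t^{-1}x_i$ shows $D(ta_1,\dots,ta_k)=D(a_1,\dots,a_k)$ for every $t\neq0$, so the inner sum is $v(b)\,D(a_1,\dots,a_k)$; and the substitution $x_i\mapsto x_i+c$ preserves distinctness while multiplying $D$ by $\psi(c\sum_i a_i)$, forcing $D(a_1,\dots,a_k)=0$ whenever $\sum_i a_i\neq0$. This already proves the first assertion and reduces everything to the master identity $N_\F=\tfrac1q(q)_k+\tfrac{v(b)}q\,D(a_1,\dots,a_k)$ in the sum-zero regime.

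Next I would produce a recurrence for $D$ by isolating the last coordinate. Writing $\sum_{x_k\notin\{x_1,\dots,x_{k-1}\}}\psi(a_kx_k)=q\,[\,a_k=0\,]-\sum_{i<k}\psi(a_kx_i)$ (Iverson bracket) and folding the second piece back into the sum yields
\[
D(a_1,\dots,a_k)=q\,[\,a_k=0\,]\,D(a_1,\dots,a_{k-1})-\sum_{i=1}^{k-1}D(a_1,\dots,a_{i-1},a_i+a_k,a_{i+1},\dots,a_{k-1}).
\]
The crucial feature is that each term on the right merges $a_k$ into an earlier coefficient, and merging preserves the total $\sum_i a_i$; combined with shift invariance (which kills any $D$ whose coefficients do not sum to zero), this confines the recursion to the sum-zero world. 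In particular, for a sum-zero tuple with $a_k\neq0$ the diagonal term drops out and one is left with a clean length-reducing recurrence among sum-zero tuples, which is the engine for everything that follows.

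With these tools the remaining task is to match $D$ to the auxiliary $d$. I would take $d$ to be defined by the stated recurrence (with $d(\,)=-1$, the value forced by the $k=1$ case) and prove by induction on $k$ that it admits the closed form
\[
d(a_1,\dots,a_k)=-\sum_{S\subseteq\{1,\dots,k\}}(-1)^{|S|}\,|S|!\;D\big((a_i)_{i\notin S}\big).
\]
For this representation the identity $d(a_1,\dots,a_k)+\sum_{i=1}^k d(a_1,\dots,\widehat{a_i},\dots,a_k)=-D(a_1,\dots,a_k)$ is purely formal: a short telescoping of the subset sum (reindex deletions $S\mapsto S\cup\{i\}$) collapses everything except the $S=\emptyset$ term. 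Substituting this into the master identity reproduces the displayed expression for $N_\F$ exactly, and, because $D$ vanishes in the sum-nonzero case, it instantly yields the branch $d=-\sum_i d(\dots\widehat{a_i}\dots)$ of the recurrence.

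The hard part is the sum-zero branch, $d(a_1,\dots,a_k)=(q-k)\,d(a_1,\dots,\widehat{a_i},\dots,a_k)$ with a value independent of the deleted index $i$. Through the closed form this becomes a statement about $D$ alone: for $\sum_i a_i=0$,
\[
\sum_{S\subseteq\{1,\dots,k\}}(-1)^{|S|}|S|!\;D\big((a_j)_{j\notin S}\big)=(q-k)\sum_{\substack{S\subseteq\{1,\dots,k\}\\ i\in S}}(-1)^{|S|-1}(|S|-1)!\;D\big((a_j)_{j\notin S}\big).
\]
By shift invariance only the zero-sum subsets $S$ survive on either side, so the real content is an identity relating the zero-sum subsets containing $i$ to those avoiding it, with the factor $q-k$ as the prize. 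I expect to establish this by induction on $k$, feeding in the merge recurrence of the second paragraph and tracking how the $|S|!$ weights interlock with the $q$ produced by character orthogonality; the independence of $i$ then follows because, once rewritten through $D$, the right-hand side is manifestly symmetric in the coordinates. The delicate bookkeeping of signs and factorial weights in this single induction is, I anticipate, the only genuinely hard step, with everything else being either a symmetry of $D$ or a formal manipulation of the subset sum defining $d$.
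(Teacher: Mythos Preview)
Your character-sum framework is a genuine alternative to the paper's argument and correctly handles everything except the final branch of the recurrence. The master identity $N_\F=\tfrac{1}{q}(q)_k+\tfrac{v(b)}{q}D(a_1,\dots,a_k)$, the vanishing of $D$ under shifts when $\sum a_i\ne0$, and the telescoping $d+\sum_i d(\dots,\widehat{a_i},\dots)=-D$ are all valid, and together they deliver the displayed formula for $N_\F$ as well as the second branch of the $d$-recurrence. But the sum-zero branch $d(a_1,\dots,a_k)=(q-k)\,d(\dots,\widehat{a_i},\dots)$ is left as an ``expectation.'' Your proposed route---feed the merge recurrence $D(a_1,\dots,a_k)=q\,[a_k{=}0]\,D(a_1,\dots,a_{k-1})-\sum_{j<k}D(\dots,a_j+a_k,\dots)$ into the subset expansion and induct on $k$---has a structural mismatch: merging \emph{alters} the coefficient list, while your closed form for $d$ keeps the $a_i$ fixed and only varies the index set, so it is far from clear how the factor $q-k$ is supposed to emerge from that combination. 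As written, this is not a proof of the theorem.

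The paper bypasses the difficulty by giving $d$ a concrete meaning, $d(a_1,\dots,a_k)=N_{\F^*}(a_1,\dots,a_k;1)-N_{\F^*}(a_1,\dots,a_k;0)$, and then invoking a single substitution (its Lemma~2.5): when $\sum_j a_j=0$, setting $y_j=x_j-x_i$ bijects distinct $\F$-tuples onto $\F\times\{\text{distinct }(k-1)\text{-tuples in }\F^*\}$ and gives $N_\F(a_1,\dots,a_k;b)=q\,N_{\F^*}(\dots,\widehat{a_i},\dots;b)$ for every $i$. Combined with $N_\F(\cdot;1)-N_\F(\cdot;0)=d+\sum_i d(\dots,\widehat{a_i},\dots)$ this yields $q\,d(\dots,\widehat{a_i},\dots)=d+k\,d(\dots,\widehat{a_i},\dots)$, i.e.\ exactly the missing branch, with the $i$-independence automatic. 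In your own language the fix is just as short: set $D^*(c_1,\dots,c_m)=\sum_{\substack{x_1,\dots,x_m\in\F^*\\ \text{distinct}}}\psi\big(\sum c_jx_j\big)$; dilation invariance on $\F^*$ gives $d=-D^*$ exactly as your argument gave $N_\F$ in terms of $D$, and the same translation $x_j\mapsto x_j-x_i$ gives $D(a_1,\dots,a_k)=q\,D^*(\dots,\widehat{a_i},\dots)$ whenever $\sum_j a_j=0$. Then $d(a_1,\dots,a_k)=-D(a_1,\dots,a_k)-\sum_j d(\dots,\widehat{a_j},\dots)=q\,d(\dots,\widehat{a_i},\dots)-k\,d(\dots,\widehat{a_i},\dots)=(q-k)\,d(\dots,\widehat{a_i},\dots)$, which is the single line your argument lacks.
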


This immediately gives  an $O(k!)$ algorithm for computing $N_{\F}(a_1,a_2, \ldots,a_k;b)$ explicitly.
\begin{corollary}
The number $N_{\F}(a_1,a_2, \ldots,a_k;b)$ can be computed in $O(k!)$ field operations.
\end{corollary}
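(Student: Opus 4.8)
The plan is to read the recurrence in Theorem~\ref{thm:recurrence} as an algorithm and to bound the number of field operations it performs. First I would observe that evaluating $N_{\F}(a_1,\dots,a_k;b)$ reduces to a constant amount of auxiliary arithmetic together with a bounded number of evaluations of $d$. Indeed, the falling factorial $(q)_k=q(q-1)\cdots(q-k+1)$ is computed in $O(k)$ field operations, the value $v(b)$ costs $O(1)$, and once $d(a_1,\dots,a_k)$ and the $k$ values $d(a_1,\dots,\widehat{a_i},\dots,a_k)$ are known, assembling the formula (the division by $q$, the factor $v(b)$, and the sum of $k+1$ terms) costs a further $O(k)$ operations. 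Hence the total cost is dominated by the cost of evaluating $d$, and it suffices to bound the latter.

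Next I would turn the three-case recurrence for $d$ into a recursive procedure and examine its recursion tree. The base case $k=1$ returns $-v(a_1)$ in $O(1)$ operations. For $k\geq 2$ one must test whether $\sum_{i=1}^k a_i=0$; if the running sum is passed down the recursion and updated by a single subtraction whenever an element is deleted, this test costs $O(1)$ per call. The key point is that branching occurs in only one of the two nontrivial cases: when $\sum_i a_i=0$ the recurrence makes a \emph{single} recursive call on a $(k-1)$-tuple followed by one multiplication by $q-k$, so it does not increase the branching factor, whereas when $\sum_i a_i\neq 0$ the value $d$ is a sum of $k$ evaluations of $d$ on $(k-1)$-tuples. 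Consequently the worst case for the running time is exactly when the sum is nonzero at every node, making the branching factor maximal at each level.

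In that worst case a node at depth $t$ from the root carries a tuple of length $k-t$, and the number of such nodes is at most the number of ordered deletions of $t$ elements, namely $k(k-1)\cdots(k-t+1)=k!/(k-t)!$. Each node performs $O(k-t)$ field operations (maintaining the running sum and combining its children), so the total work is bounded by
\[
\sum_{t=0}^{k-1}\frac{k!}{(k-t)!}\,(k-t)=k!\sum_{j=0}^{k-1}\frac{1}{j!}\leq e\cdot k!=O(k!),
\]
and evaluating $d$ on the $k$ subtuples of length $k-1$ costs at most $k\cdot O((k-1)!)=O(k!)$ in the same way. Adding the $O(k)$ for the auxiliary arithmetic gives the claimed $O(k!)$ bound. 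The only point requiring care is the bookkeeping of the per-node cost and the summation above; in particular, one should note that no memoization over the $2^k$ index subsets is needed, since the crude tree-size estimate already yields $O(k!)$. This is the main, though routine, obstacle, and it completes the proof.
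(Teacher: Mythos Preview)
Your argument is correct and is precisely the justification the paper has in mind: the paper gives no separate proof but simply declares the corollary an immediate consequence of the recurrence in Theorem~\ref{thm:recurrence}, and your write-up supplies the routine cost analysis of that recursion. The only minor remark is that in the case $\sum_i a_i=0$ you could observe (via the third branch of the recurrence, or Lemma~\ref{lem:relation2}) that all $k$ values $d(a_1,\dots,\widehat{a_i},\dots,a_k)$ coincide, so a single evaluation suffices; but your cruder bound $k\cdot O((k-1)!)=O(k!)$ is already enough for the stated conclusion.
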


We also obtain an explicit formula for
 $N_{\F}(a_1,a_2, \ldots,a_k;b)$ involving combinatorial numbers depending on $a_i$'s.  In particular, we obtain a closed formula when $a_i, 1\leq i\leq k$ take at most three distinct values. This generalizes the main result of Theorem \ref{thm:li2008}.

\begin{theorem} \label{thm:summation}
	Let $p(a_1,a_2, \ldots,a_k;k,i)$ be the number of permutations in $S_k$ of $i$ cycles with the sum of $a_i$'s over its each cycle vanishing. Then
	$$N_{\F}(a_1,a_2, \ldots,a_k;b)=\frac{1}{q}(q)_k + \frac{v(b)}{q}\sum_{i=1}^n(-1)^{k-i}p(a_1,a_2, \ldots,a_k;k,i)q^i.$$
 In particular, if $\sum_{i=1}^k a_i=0$ but $\sum_{i\in I}a_i\neq 0$ for all $I\subsetneq \{1, 2, \ldots,k\}$, then
	$$ N_{\f}(a_1,a_2, \ldots,a_k;b)=\frac{1}{q}(q)_k+v(b)(-1)^{k-1}(k-1)!.$$
\end{theorem}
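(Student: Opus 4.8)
The plan is to evaluate $N_{\F}(a_1,\dots,a_k;b)$ by a M\"obius inversion over the partition lattice $\Pi_k$ of $\{1,2,\dots,k\}$, converting the distinctness constraint into a signed sum first over set partitions and then over permutations. For a set partition $P$ with blocks $B_1,\dots,B_m$, let $f(P)$ denote the number of tuples $(x_1,\dots,x_k)\in\F^k$ satisfying $\sum_i a_ix_i=b$ and constant on each block of $P$. If $g(P)$ is the number of solutions whose coincidence pattern is exactly $P$ (so $x_i=x_j$ precisely when $i,j$ share a block), then $f(P)=\sum_{Q\geq P}g(Q)$, where $\hat 0$ is the partition into singletons and $N_{\F}=g(\hat 0)$. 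M\"obius inversion on $\Pi_k$ then yields $N_{\F}=\sum_{P}\mu(\hat 0,P)f(P)$.

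Next I would compute $f(P)$ and replace partitions by permutations. Imposing constancy on the blocks turns $\sum_i a_ix_i=b$ into the linear equation $\sum_{l=1}^m c_l y_l=b$ in the block-values $y_l$, where $c_l=\sum_{j\in B_l}a_j$; hence $f(P)=q^{m-1}$ unless every $c_l=0$, in which case $f(P)=q^m$ if $b=0$ and $f(P)=0$ if $b\neq0$. The crucial bridge is the classical identity for the partition lattice, $\mu(\hat 0,P)=\prod_{l}(-1)^{|B_l|-1}(|B_l|-1)!=\sum_{\sigma:\,\operatorname{cyc}(\sigma)=P}\sign(\sigma)$, the sum being over all permutations whose cycles are exactly the blocks of $P$ (there are $\prod_l(|B_l|-1)!$ of them, each of sign $(-1)^{k-m}$). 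Since $f(\operatorname{cyc}(\sigma))$ is the same for all $\sigma$ with a given cycle partition, this rewrites the answer as a signed sum over $S_k$:
$$N_{\F}(a_1,\dots,a_k;b)=\sum_{\sigma\in S_k}\sign(\sigma)\,f(\operatorname{cyc}(\sigma)),\qquad \sign(\sigma)=(-1)^{k-c(\sigma)},$$
where $c(\sigma)$ is the number of cycles and the cycle sums of $\sigma$ play the role of the $c_l$ above.

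I would then split $S_k$ into the set where some cycle sum is nonzero and the set $S_k^{0}$ where every cycle sum vanishes, the latter being counted, according to the number of cycles $i$, by $p(a_1,\dots,a_k;k,i)$. The evaluation over all of $S_k$ rests on the cycle generating function $\sum_{\sigma\in S_k}x^{c(\sigma)}=x(x+1)\cdots(x+k-1)$; specializing at $x=-q$ and tracking signs gives $\sum_{\sigma\in S_k}(-1)^{k-c(\sigma)}q^{c(\sigma)}=(q)_k$, so the full sum $\sum_{\sigma}\sign(\sigma)q^{c(\sigma)-1}$ equals $\tfrac1q(q)_k$. Treating $b=0$ and $b\neq0$ separately through $v(b)$, and recognizing that the contribution of $S_k^{0}$ with exactly $i$ cycles is $(-1)^{k-i}p(a_1,\dots,a_k;k,i)q^{i}$, both cases collapse into the single formula $N_{\F}=\tfrac1q(q)_k+\tfrac{v(b)}{q}\sum_i(-1)^{k-i}p(a_1,\dots,a_k;k,i)q^{i}$.

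Finally, for the special case I would observe that a permutation lies in $S_k^{0}$ exactly when each of its cycles is a nonempty index set with vanishing $a$-sum. Under the hypothesis that $\sum_{i\in I}a_i\neq0$ for every proper nonempty $I$, the only admissible cycle is the whole set $\{1,\dots,k\}$, so $S_k^{0}$ consists precisely of the $(k-1)!$ full $k$-cycles; thus $p(\cdots;k,1)=(k-1)!$ and $p(\cdots;k,i)=0$ for $i\geq2$, and substitution gives $N_{\F}=\tfrac1q(q)_k+v(b)(-1)^{k-1}(k-1)!$. I expect the main obstacle to be establishing the M\"obius-to-sign identity cleanly and carrying out the sign bookkeeping in the generating-function step; once these are in place the two target formulas follow by direct substitution, and the result is consistent with (and could alternatively be extracted from) the recurrence of Theorem \ref{thm:recurrence}.
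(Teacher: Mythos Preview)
Your proposal is correct and follows essentially the same route as the paper. The only difference is that the paper invokes the Li--Wan sieve identity $|\overline{X}|=\sum_{\tau\in S_k}\sign(\tau)|X_\tau|$ as a black box (Theorem~\ref{thm:sieve}), whereas you re-derive it via M\"obius inversion on the partition lattice together with the classical formula $\mu(\hat 0,P)=\prod_l(-1)^{|B_l|-1}(|B_l|-1)!$; after that, the computation of $|X_\tau|$, the split according to whether all cycle sums vanish, the use of the Stirling identity $\sum_i(-1)^{k-i}c(k,i)q^i=(q)_k$, and the treatment of the special case are identical to the paper's argument.
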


\begin{theorem}\label{thm:special case}
	Denote by $\{x\}_p=x-\lfloor x/p\rfloor p$ the least non-negative residue of $x$ modulo $p$. For a special case $[a_1, a_2,a_3, \dots, a_k]=[a_1, a_2, 1,\dots, 1]$, we have
	 \begin{enumerate}
	 	\item If $a_1+a_2+k-2\neq 0$, then
	 	$$ N_{\F}(a_1,a_2,\underbrace{1,\dots,1}_{k-2};b)=\frac{1}{q}(q)_k.$$
	 	\item If $a_1+a_2+k-2=0$ and $a_{1},a_{2}\notin\mathbb{F}_p$, then
	 	\begin{equation*}
	 	N_{\F}(a_1,a_2,\underbrace{1,\dots,1}_{k-2};b)=\frac{1}{q}(q)_k+v(b)(-1)^{k-1+\lfloor\frac{k-2}{p}\rfloor}(k-2)!\left(\frac{q \{ k-2 \}_p -p(k-2)}{q-p}+1\right)\binom{q/p-1}{\lfloor (k-2)/p\rfloor}.
	 	\end{equation*}
	 	\item If $a_1+a_2+k-2=0$ and $a_1,a_2\in \mathbb{F}_p$, then
	 	\begin{equation*}
	 	N_{\F}(a_1,a_2,\underbrace{1,\dots,1}_{k-2};b)=\frac{1}{q}(q)_k+v(b)(-1)^{k-1+\lfloor\frac{k-1}{p}\rfloor}(k-2)!(k-1-q1_A(a_1,a_2))\binom{q/p-1}{\lfloor (k-1)/p\rfloor},
	 	\end{equation*}
	 	where $A=\{(a_1,a_2)\in\mathbb{F}_p^2: a_1\neq 1,\ a_2\neq 1\ \text{and}\  \{ 1-a_1\}_p +\{ 1-a_2\}_p\leq p\}$.
	 \end{enumerate}
\end{theorem}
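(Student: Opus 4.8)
The plan is to deduce part (1) at once from Theorem \ref{thm:recurrence} and to treat parts (2) and (3) through the explicit summation formula of Theorem \ref{thm:summation}, reducing everything to the combinatorics of $p(a_1,a_2,1,\dots,1;k,i)$. Write $Q=q/p$ and view the tuple $[a_1,a_2,1,\dots,1]$ as carrying weight $1$ on the ``ordinary'' positions $3,\dots,k$ and weights $a_1,a_2$ on the ``special'' positions $1,2$. For (1) the hypothesis $a_1+a_2+k-2\ne 0$ is exactly $\sum_i a_i\ne 0$, so the first branch of Theorem \ref{thm:recurrence} gives $\frac1q(q)_k$ with no correction. For (2) and (3) we have $\sum_i a_i=0$, and the task is to evaluate $\sum_i(-1)^{k-i}p([a_1,a_2,1,\dots,1];k,i)\,q^i$.

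The first step is a cycle-type analysis of the permutations counted by $p$. A cycle built from a single special position of weight $a$ together with $c$ ordinary positions has weight $a+c$, which vanishes for some integer $c\ge 0$ \emph{if and only if} $a\in\ff$; a purely ordinary cycle has weight equal to its length, hence is admissible exactly when its length is divisible by $p$. This dichotomy is the structural reason for the split between (2) and (3): when $a_1,a_2\notin\ff$ the two special positions are forced into a common cycle (the ``merged'' type), whereas when $a_1,a_2\in\ff$ they may also occupy two distinct special cycles (the ``split'' type). To handle the ordinary cycles I would introduce $c(m,j)$, the number of permutations of an $m$-set into $j$ cycles each of length divisible by $p$, and record the exponential generating identity
$$\sum_{m,j}c(m,j)z^j\frac{x^m}{m!}=\exp\!\left(-\frac zp\log(1-x^p)\right)=(1-x^p)^{-z/p},$$
whence $\sum_j c(pn,j)(-q)^j=(pn)!\,(-1)^n\binom{Q}{n}$. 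This is precisely the point at which the binomial coefficient $\binom{q/p}{\cdot}$ of Theorem \ref{thm:li2008} reappears.

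For part (2) only the merged type contributes. Choosing $c$ ordinary positions for the special cycle, arranging that cycle, and distributing the rest into admissible ordinary cycles contributes $\binom{k-2}{c}(c+1)!\,((k-2)-c)!=(k-2)!\,(c+1)$, and the constraint $a_1+a_2+c=0$ forces $c\equiv k-2\pmod p$, i.e.\ $(k-2)-c=pn$. After applying the generating identity and the weighting $\sum_i(-1)^{k-i}q^i$, the correction collapses to
$$v(b)(-1)^{k-1}(k-2)!\sum_{n=0}^{\lfloor(k-2)/p\rfloor}\bigl((k-1)-pn\bigr)(-1)^n\binom{Q}{n}.$$
I would then evaluate this with the partial alternating identities $\sum_{n\le M}(-1)^n\binom{Q}{n}=(-1)^M\binom{Q-1}{M}$ and its $n$-weighted companion $\sum_{n\le M}n(-1)^n\binom{Q}{n}=\frac{QM}{Q-1}(-1)^M\binom{Q-1}{M}$, simplifying with $pQ=q$, $Q-1=(q-p)/p$, and $pM=(k-2)-\{k-2\}_p$; this returns exactly the bracket in (2).

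Part (3) carries the main work. The merged type still contributes the expression above, but the split type now adds a second sum. Parametrizing the two special cycles by the least residues $c_i^0=\{-a_i\}_p$, the admissibility $a_i+c_i=0$ is solvable, the analogous factor telescopes to $(k-2)!/m'!$ with $m'=(k-2)-c_1-c_2$, and summing over the two independent residue shifts produces the linear weight $(N_0+1-n')$, where $N_0=\bigl((k-2)-c_1^0-c_2^0\bigr)/p$. The crucial observation is that $c_1^0+c_2^0\equiv k-2\pmod p$ and $0\le c_1^0+c_2^0\le 2p-2$ force $c_1^0+c_2^0\in\{\{k-2\}_p,\{k-2\}_p+p\}$, so $N_0\in\{\lfloor(k-2)/p\rfloor,\lfloor(k-2)/p\rfloor-1\}$. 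Evaluating the split sum by the same two identities and adding it to the merged term, every branch reduces — after the cancellations $-\frac{pqL}{q-p}+\frac{q^2L}{q-p}=qL$ and $L(Q-1)-Q(L-1)=Q-L$ — to $(-1)^{L'}\binom{Q-1}{L'}(k-1-q\,1_A)$ with $L'=\lfloor(k-1)/p\rfloor$. The main obstacle is exactly this bookkeeping: one must check case by case, according to the branch of $N_0$ and according to whether $p\mid k-1$ (where the binomial index jumps from $L$ to $L+1$ and must be reconciled via $\binom{Q-1}{L+1}=\frac{Q-1-L}{L+1}\binom{Q-1}{L}$), that the surviving constant is precisely $k-1$ or $k-1-q$. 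The identity $\{1-a\}_p=\{-a\}_p+1$ valid for $a\ne 1$ then translates the numerical condition $c_1^0+c_2^0\le p-2$ that singles out the surviving $-q$ into the defining inequality $\{1-a_1\}_p+\{1-a_2\}_p\le p$ of $A$, completing the identification of the indicator $1_A$.
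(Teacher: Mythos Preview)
Your proposal is correct and follows essentially the same route as the paper: both start from the sieve formula (packaged for you as Theorem~\ref{thm:summation}), classify the contributing permutations into the ``merged'' and ``split'' types according to whether positions $1$ and $2$ share a cycle, reduce the ordinary-cycle part via the identity $\sum_i(-1)^ip(pn,i)q^i=(-1)^n(pn)!\binom{q/p}{n}$ (the paper records this as Lemma~\ref{lem:p(k,i)}, which you rederive from the EGF $(1-x^p)^{-z/p}$), and finish with the same pair of alternating partial binomial sums (the paper's Lemma~\ref{lem:binomial}). The only differences are notational---your parameter $c$ is the paper's $j-2$ (merged) or $j_i-1$ (split), and your $N_0$-branching is the paper's case split on the counting function $N_j$.
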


  When the field is prime and the $a_i$'s satisfy some strong conditions, this problem was first considered by Sch\"{o}nemann \cite{schonemann} 180 years ago, and he proved the following result:

\begin{theorem}[Sch\"{o}nemann]
	Let $p$ be a prime, $a_1, a_2, \ldots,a_k$ be arbitrary integers. If  $\sum_{i=1}^k a_i\equiv 0\pmod{p}$ but $\sum_{i\in I}a_i\not\equiv 0\pmod{p}$ for all $I\subsetneq \{1,\dots,k\}$,
then
	$$ N_{\ff}(a_1, a_2, \dots, a_k; 0)=\frac{1}{p}(p)_k + (-1)^{k-1}(k-1)!(p-1).$$
\end{theorem}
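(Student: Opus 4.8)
The plan is to derive Sch\"{o}nemann's theorem as an immediate specialization of the ``in particular'' clause of Theorem~\ref{thm:summation}. First I would observe that although the $a_i$ are given as arbitrary integers, the solution count $N_{\ff}(a_1,\ldots,a_k;0)$ over the prime field depends only on the residues $a_i \bmod p$, since the equation $a_1x_1+\cdots+a_kx_k=0$ in $\ff$ is unchanged when each coefficient is replaced by its reduction. Thus I may regard $a_1,\ldots,a_k$ as elements of $\mathbb{F}_p$, and under this identification the congruence hypotheses translate verbatim into the field-theoretic hypotheses $\sum_{i=1}^k a_i=0$ and $\sum_{i\in I}a_i\neq 0$ for every proper nonempty $I\subsetneq\{1,\ldots,k\}$.

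With the hypotheses matched, I would simply invoke Theorem~\ref{thm:summation} with $q=p$ and $b=0$. Since $b=0$ gives $v(b)=q-1=p-1$, the closed form in that theorem reads
$$N_{\ff}(a_1,\ldots,a_k;0)=\frac{1}{p}(p)_k+(p-1)(-1)^{k-1}(k-1)!,$$
which is precisely the claimed expression.

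For readers who want to see why the combinatorial sum collapses, I would unpack the general formula of Theorem~\ref{thm:summation} in this case. The quantity $p(a_1,\ldots,a_k;k,i)$ counts permutations of $S_k$ with exactly $i$ cycles whose coefficient sum vanishes on each cycle. A cycle with support $I$ imposes the condition $\sum_{j\in I}a_j=0$; by hypothesis the only zero-sum subset is the full set $\{1,\ldots,k\}$ (the empty set being excluded as a cycle support). Hence every contributing permutation must consist of a single $k$-cycle, forcing $p(a_1,\ldots,a_k;k,i)=0$ for $i\geq 2$ and $p(a_1,\ldots,a_k;k,1)=(k-1)!$, the number of $k$-cycles in $S_k$. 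The summation therefore reduces to the single term $(-1)^{k-1}(k-1)!\,q$, reproducing the closed form above after multiplying by $v(b)/q$.

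The main point to verify, and indeed the only nontrivial step, is this collapse of the cycle-sum count: one must confirm that the no-proper-subset-sum condition rules out every cycle type except the full $k$-cycle. Once that is checked the result is immediate, since all the analytic difficulty has already been absorbed into Theorem~\ref{thm:summation}.
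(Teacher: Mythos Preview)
Your proposal is correct and is precisely the paper's approach: the paper derives this as the ``in particular'' clause of Theorem~\ref{thm:summation}, observing that the no-proper-subset-sum hypothesis forces $p(a_1,\ldots,a_k;k,i)=0$ for $i\geq 2$ and $p(a_1,\ldots,a_k;k,1)=(k-1)!$, whence specializing $q=p$, $b=0$, $v(0)=p-1$ yields Sch\"{o}nemann's formula.
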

One may generalize this problem from $\ff$ to $\z/n\z$, i.e., the residue ring modulo $n$. Similarly, given $a_1, a_2, \ldots,a_k,b\in \z/n\z$, we define $$N_{\z/n\z}(a_1,a_2, \ldots,a_k;b)=\#\{(x_1,x_2,\ldots,x_k)\in (\z/n\z)^k:a_1x_1+a_2x_2+\cdots+a_kx_k=b:x_i\neq x_j,\ \forall\ i\neq j\}.$$
  By using tools from additive combinatorics and group theory, Grynkiweicz et al. \cite{grynkiewicz} gave necessary and sufficient conditions to characterize when  $N_{\z/n\z}(a_1,a_2, \ldots,a_k;b)>0$; see also \cite{adams,grynkiewicz} for connections to zero-sum theory and \cite{bibakcongruences} for applications to coding theory.

  Bibak et al. generalize Sch\"{o}nemann's theorem from $\ff$ to $\z/n\z$ (\cite{bibakgraph}). They proved the following result:
  \begin{theorem}[\cite{bibakgraph}, Theorem 2.3]\label{thm:bibak} %
	Let $a_1, a_2, \ldots,a_k,b,n\in\Z$, $n\geq 1$, and $\gcd(\sum_{i\in I}a_i,n)=1$ for all $I\subsetneq \{1,\dots,k\}$. The number $N_{\Z/n\Z}(a_1,a_2, \ldots,a_k;b)$ of solutions $(x_1,x_2,\ldots,x_k)\in (\Z/n\Z)^k$ of linear congruence $a_1x_1+a_2x_2+\dots+a_kx_k\equiv b \pmod{n}$ with all $x_i$ distinct modulo $n$, is
	$$ N_{\Z/n\Z}(a_1,a_2, \ldots,a_k;b)=\begin{cases}
	\frac{1}{n}(n)_k + (-1)^k(k-1)!, & \text{if}\ \gcd(\sum_{i=1}^k a_i,n)\nmid b;\\
	\frac{1}{n}(n)_k+(-1)^{k-1}(k-1)!\big(\gcd(\sum_{i=1}^k a_i,n)-1\big), &\text{if}\ \gcd(\sum_{i=1}^k a_i,n)\mid b.
	\end{cases}$$
\end{theorem}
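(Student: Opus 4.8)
The plan is to transport the cycle inclusion--exclusion underlying Theorem~\ref{thm:summation} verbatim to the ring $\Z/n\Z$, the only change being that the finite-field count of an \emph{unrestricted} linear equation is replaced by its residue-ring analogue. For a coefficient vector $(c_1,\dots,c_m)$ write $M(c_1,\dots,c_m;b)$ for the number of all tuples $(y_1,\dots,y_m)\in(\Z/n\Z)^m$ with $c_1y_1+\cdots+c_my_m\equiv b\pmod n$. Classifying an arbitrary tuple $(x_1,\dots,x_k)$ by its pattern of coincidences and identifying the coordinates lying in a common cycle of a permutation $\sigma\in S_k$ gives, by Möbius inversion over the partition lattice (equivalently, by collecting permutations of a fixed cycle type), the identity
\[
N_{\Z/n\Z}(a_1,\dots,a_k;b)=\sum_{\sigma\in S_k}(-1)^{k-c(\sigma)}\,M(a^{\sigma};b),
\]
where $c(\sigma)$ is the number of cycles of $\sigma$ and $a^\sigma$ is the tuple, indexed by the cycles $C$ of $\sigma$, with entry $\sum_{i\in C}a_i$. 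First I would record the standard free count over $\Z/n\Z$, obtained from the additive characters of $\Z/n\Z$: writing $d=\gcd(c_1,\dots,c_m,n)$, one has $M(c_1,\dots,c_m;b)=d\,n^{m-1}$ when $d\mid b$ and $M=0$ otherwise.

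Next I would feed in the hypothesis. For any $\sigma$ with $c(\sigma)\geq 2$ every cycle $C$ is indexed by a \emph{proper} subset of $\{1,\dots,k\}$, so $\gcd(\sum_{i\in C}a_i,n)=1$ by assumption; consequently the overall gcd of the entries of $a^\sigma$ together with $n$ is $1$, whence $M(a^\sigma;b)=n^{c(\sigma)-1}$ independently of $b$. The sole cycle type for which $b$ can matter is the $k$-cycle ($c(\sigma)=1$), where $a^\sigma=(\sum_{i=1}^k a_i)$ and $M(a^\sigma;b)=g\,[\,g\mid b\,]$ with $g=\gcd(\sum_{i=1}^k a_i,n)$. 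Since there are exactly $(k-1)!$ $k$-cycles, the inversion splits as
\[
N_{\Z/n\Z}(a_1,\dots,a_k;b)=\sum_{\sigma:\,c(\sigma)\geq 2}(-1)^{k-c(\sigma)}n^{c(\sigma)-1}+(-1)^{k-1}(k-1)!\,g\,[\,g\mid b\,].
\]
To evaluate the first sum I would use the cycle-counting identity $\sum_{\sigma\in S_k}x^{c(\sigma)}=x(x+1)\cdots(x+k-1)$; substituting $x=-n$ and simplifying yields $\sum_{\sigma\in S_k}(-1)^{k-c(\sigma)}n^{c(\sigma)-1}=\tfrac1n(n)_k$, so the partial sum over $c(\sigma)\geq 2$ equals $\tfrac1n(n)_k-(-1)^{k-1}(k-1)!$ after subtracting the $k$-cycle contribution.

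Combining the two pieces leaves
\[
N_{\Z/n\Z}(a_1,\dots,a_k;b)=\tfrac1n(n)_k-(-1)^{k-1}(k-1)!+(-1)^{k-1}(k-1)!\,g\,[\,g\mid b\,],
\]
and distinguishing the cases $g\nmid b$ and $g\mid b$ reproduces the two formulas of Theorem~\ref{thm:bibak} exactly (using $(-1)^{k}=-(-1)^{k-1}$ in the first case and factoring $(g-1)$ in the second). I expect the main obstacle to be purely at the level of careful bookkeeping rather than ideas: one must verify the residue-ring free-count lemma (including that the relevant $t$-values form a subgroup of order $d$ so that the character sum collapses to $d\,[\,g\mid b\,]$), and confirm that the coprimality hypothesis genuinely annihilates all $b$-dependence outside the top term, since a single cycle whose sum shared a factor with $n$ would reintroduce $b$ and break the clean dichotomy. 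The combinatorial identity and the two sign computations are routine once the free count and the hypothesis are in place, which is precisely why the same technique that proves Theorem~\ref{thm:summation} applies with $v(b)$ and the field's zero/nonzero dichotomy replaced by $g\,[\,g\mid b\,]$ and the gcd condition.
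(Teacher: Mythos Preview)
Your proposal is correct and follows essentially the same route as the paper: both apply the Li--Wan cycle sieve $|\overline{X}|=\sum_{\sigma\in S_k}\sign(\sigma)|X_\sigma|$, use the unrestricted count over $\Z/n\Z$ (the paper cites Lehmer's result rather than redoing the character sum) to see that every $\sigma$ with at least two cycles contributes $n^{c(\sigma)-1}$ under the coprimality hypothesis, isolate the $(k-1)!$ full cycles as the only $b$-sensitive term, and collapse the remaining sum via the Stirling identity $\sum_i(-1)^{k-i}c(k,i)n^i=(n)_k$ (which is the same identity as your $\sum_\sigma x^{c(\sigma)}=x(x+1)\cdots(x+k-1)$). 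The only cosmetic differences are your explicit mention of characters for the free count and your generating-function phrasing of the Stirling identity.
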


 The main technique for counting $N_{\F}(a_1,a_2, \ldots,a_k;b)$   is a sieve method for distinct coordinate counting developed by Li and Wan in \cite{li2010} and it works well for the $\z/n\z$ case and thus we give another proof of Theorem \ref{thm:bibak}.

   This paper is organized as follows.  Some preliminary results
    and the proofs of Theorem \ref{thm:existence} and Theorem \ref{thm:recurrence} are given in Section 2. The Li-Wan sieve technique and the proof of  Theorem \ref{thm:summation} are introduced in Section 3.
     The proof for Theorem \ref{thm:special case} is given in Section 4
     and the proof for Theorem \ref{thm:bibak} is given in Section 5.

   {\bf Notations.}  We use $(q)_k:=q(q-1)\dots (q-k+1)$ to denote the falling factorial of $q$ and $\lfloor x\rfloor$ to denote the greatest integer less than or equal to $x$. If $A$ is a set, we use $1_{A}(x)$ to denote the indicator function, thus $1_A(x)=1$ when $x\in A$ and $1_A(x)=0$ otherwise.

  \section{Preliminary Results and the Proof of Theorem  \ref{thm:recurrence} }

The number of ordered $k$-tuples $(x_1,x_2,\ldots,x_k)\in\F^k$ with all $x_i$ distinct is $(q)_k$, and the sum $\sum_{i=1}^k a_ix_i$ could be any element $b$ of the finite field $\F$. One expects that in favorable cases that the sums are equally distributed and thus $N_{\F}(a_1,a_2, \ldots,a_k;b)$ should be roughly $\frac{1}{q}(q)_k$. It is indeed the case when the $a_i$'s do not sum to zero. A simple observation gives the following result.

\begin{lemma}\label{lem:NFq}
	If $\sum_{i=1}^k a_i\neq 0$, then  $N_{\F}(a_1,a_2, \ldots,a_k;b)$ are equal  for all $b\in \F$. If $\sum_{i=1}^k a_i=0$, then $N_{\F}(a_1,a_2, \ldots,a_k)$ are equal for all $b\in\F$ expect $b=0$.
\end{lemma}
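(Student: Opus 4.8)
The plan is to prove this via a symmetry/substitution argument on the solution set. Let me think about what the statement is claiming: when $\sum a_i \neq 0$, the count $N_{\F}(a_1,\ldots,a_k;b)$ is independent of $b$; when $\sum a_i = 0$, it is independent of $b$ as $b$ ranges over the nonzero elements (and $b=0$ is exceptional).

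The natural approach is to exhibit, for any two target values, a bijection between the corresponding distinct-coordinate solution sets. Suppose $s := \sum_{i=1}^k a_i \neq 0$. Given a solution $(x_1,\ldots,x_k)$ of $\sum a_i x_i = b$, I want to translate all coordinates by a common shift $t \in \F$, i.e. consider $(x_1 + t, \ldots, x_k + t)$. The distinctness condition $x_i \neq x_j$ is preserved under a uniform shift, since $(x_i + t) - (x_j + t) = x_i - x_j$. Under this map the linear form becomes $\sum a_i (x_i + t) = b + t\sum a_i = b + ts$. So the shift sends solutions of "$= b$" to solutions of "$= b + ts$". Since $s \neq 0$, as $t$ ranges over $\F$ the quantity $b + ts$ ranges over all of $\F$; choosing $t = (b' - b)/s$ gives a bijection from the solution set for $b$ to the solution set for any prescribed $b'$. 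This bijection is clearly invertible (shift by $-t$), so all the counts $N_{\F}(a_1,\ldots,a_k;b)$ coincide. This handles the first assertion completely.

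For the second assertion, suppose $s = \sum a_i = 0$. Now the same uniform shift satisfies $\sum a_i (x_i + t) = b + ts = b$, so shifting fixes the target value and no longer connects different $b$'s. Instead I would use a multiplicative (scaling) symmetry: given a solution of $\sum a_i x_i = b$ with $b \neq 0$ and any $\lambda \in \F^*$, the tuple $(\lambda x_1, \ldots, \lambda x_k)$ again has distinct coordinates (multiplication by a nonzero scalar is injective) and satisfies $\sum a_i (\lambda x_i) = \lambda b$. As $\lambda$ ranges over $\F^*$, the value $\lambda b$ ranges over all nonzero elements of $\F$, so for any nonzero $b'$ we may pick $\lambda = b'/b$ to obtain a bijection between the solution set for $b$ and that for $b'$. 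Again this is invertible via scaling by $\lambda^{-1}$, giving equality of the counts for all $b \neq 0$. The value $b = 0$ is genuinely excluded here because scaling fixes $0$ and cannot be used to reach it; this matches the exceptional case flagged in the statement.

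I do not anticipate a serious obstacle, since both maps are elementary bijections and the verifications are one-line computations. The only point requiring a little care is making sure the two symmetries are applied in the correct regimes: the additive shift requires $s \neq 0$ to move the target, while the multiplicative scaling works regardless of $s$ but only permutes the nonzero targets, which is exactly why it is the right tool for the $s = 0$ case. One should also note in passing (though the statement does not require it) that when $s \neq 0$ the additive argument already yields equality across all $b$ including $b = 0$, so there is no exceptional value in the first case, consistent with the phrasing.
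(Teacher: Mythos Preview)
Your proof is correct and follows essentially the same approach as the paper: an additive translation $(x_1,\ldots,x_k)\mapsto(x_1+c,\ldots,x_k+c)$ to handle the case $\sum a_i\neq 0$, and a multiplicative rescaling $(x_1,\ldots,x_k)\mapsto(\lambda x_1,\ldots,\lambda x_k)$ to handle the nonzero targets when $\sum a_i=0$. The paper phrases the second bijection as division by $b$ rather than multiplication by $\lambda=b'/b$, but this is the same argument.
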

\begin{proof}
	Pick an element $c\in \F$. Note that the bijective map $(x_1,x_2,\ldots,x_k)\to (x_1+c, x_2+c, \dots,x_k+c)$ sends the distinct coordinate solutions of the linear equation $a_1x_1+a_2x_2+\cdots+a_kx_k=b$ to those of the linear equation $a_1x_1+a_2x_2+\cdots+a_kx_k=b+Ac$, where $A=\sum_{i=1}^k a_i$. Thus
	$$N_{\F}(a_1,a_2, \ldots,a_k;b)=N_{\F}(a_1,a_2, \ldots,a_k;b+Ac)$$
	for any $c\in \F$. Then observe that, for $A\neq 0$, $b+Ac$ runs over all elements of $\F$ when $c$ does. Thus $N_{\F}(a_1,a_2, \ldots,a_k;b)$ are equal for all $b\in\F$ if $\sum_{i=1}^k a_i\neq 0$.
	
	For the case $\sum_{i=1}^k a_i=0$, consider the bijective map $(x_1,x_2,\ldots,x_k)\mapsto (x_1/b, x_2/b, \dots,x_k/b)$, where $b\neq 0$. It sends the distinct coordinate solution of the linear equation $a_1x_1+a_2x_2+\cdots+a_1x_k=b$ to those of the linear equation $a_1x_1+a_2x_2+\cdots+a_kx_k=1$. Thus
	$$N_{\F}(a_1,a_2, \ldots,a_k;b)=N_{\F}(a_1,a_2, \ldots,a_k;1)$$
	for all $b\neq 0$. Therefore $N_{\F}(a_1,a_2, \ldots,a_k;b)$ are equal for all $b\in \F$ except $b=0$.
\end{proof}

As an immediate consequence of Lemma \ref{lem:NFq}, we have
\begin{corollary}\label{cor:NFq}
	If $\sum_{i=1}^k a_i\neq 0$, then
	$$ N_{\F}(a_1,a_2, \ldots,a_k;b)=\frac{1}{q}(q)_k.$$
	If $\sum_{i=1}^k a_i=0$, then
	$$N_{\F}(a_1,a_2, \ldots,a_k;b)=\frac{1}{q}(q)_k-\frac{v(b)}{q}(N_{\F}(a_1,a_2, \ldots,a_k;1)-N_{\F}(a_1,a_2, \ldots,a_k;0)),$$
	where $v(b)=-1$ if $b\neq 0$, and $v(b)=q-1$ if $b=0$.
\end{corollary}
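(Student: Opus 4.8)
The plan is to deduce the corollary directly from Lemma \ref{lem:NFq} together with the single counting identity
$$\sum_{b\in\F} N_{\F}(a_1,a_2,\ldots,a_k;b)=(q)_k,$$
which holds because each of the $(q)_k$ ordered $k$-tuples with distinct coordinates has a well-defined sum $a_1x_1+\cdots+a_kx_k$ lying in exactly one residue class $b$. So the whole corollary is really just this identity combined with the uniformity already established in the lemma.

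First I would treat the case $\sum_{i=1}^k a_i\neq 0$. By Lemma \ref{lem:NFq} all $q$ of the numbers $N_{\F}(a_1,\ldots,a_k;b)$ coincide, so the counting identity immediately forces each of them to equal $\frac{1}{q}(q)_k$, which is the asserted formula.

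For the case $\sum_{i=1}^k a_i=0$, I would abbreviate $M_0=N_{\F}(a_1,\ldots,a_k;0)$ and $M_1=N_{\F}(a_1,\ldots,a_k;1)$; by Lemma \ref{lem:NFq} this $M_1$ equals $N_{\F}(a_1,\ldots,a_k;b)$ for every $b\neq 0$. The counting identity now reads $M_0+(q-1)M_1=(q)_k$. It then remains only to verify that the proposed formula reproduces $M_1$ when $b\neq 0$ (so $v(b)=-1$) and reproduces $M_0$ when $b=0$ (so $v(b)=q-1$). Clearing the factor $1/q$ in each of these two checks, both collapse to the single relation $M_0+(q-1)M_1=(q)_k$, so they hold automatically.

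The computation is entirely routine and there is no genuine obstacle here; the corollary is a direct packaging of the lemma. The only point deserving a little care is the bookkeeping of the weight $v(b)$: the correction term $-\frac{v(b)}{q}(M_1-M_0)$ is precisely what redistributes the deviations of $M_0$ and $M_1$ about the naive average $\frac{1}{q}(q)_k$, and one should check the two cases $b=0$ and $b\neq 0$ separately to confirm that the sign of $v(b)$ matches in each.
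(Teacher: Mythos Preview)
Your proposal is correct and follows exactly the paper's approach: the paper's proof consists of the single counting identity $\sum_{b\in\F}N_{\F}(a_1,\ldots,a_k;b)=(q)_k$ together with an appeal to Lemma~\ref{lem:NFq}. Your write-up simply makes the routine verification in the case $\sum a_i=0$ more explicit than the paper does.
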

\begin{proof}
	Note that
	$$ \sum_{b\in \F}N_{\F}(a_1,a_2, \ldots,a_k;b)=(q)_k.$$
	The claim then follows from this equality and Lemma \ref{lem:NFq}.
\end{proof}

Now we  prove Theorem \ref{thm:existence}.
\begin{proof}[Proof the Theorem {\rm \ref{thm:existence}}]
If $k<q$, extend the set of $a_i$'s to a set of size $q$ with $a_{k+1}=a_{k+2}=\cdots=a_q=0$. Then notice that $N_{\F}(a_1,a_2, \ldots,a_k;1)=0$ if and only if $N_{\F}(a_1,a_2, \ldots,a_k,a_{k+1},a_{k+2},\dots,a_q;1)=0$, where $a_{k+1}=a_{k+2}=\cdots=a_q=0$, so we only need to consider the case $k=q$.  Now assume $k=q$. We shall show that $N_{\F}(a_1, a_2, \ldots, a_q;1)=0$ if and only if $a_i$, $1\leq i\leq q$ are equal. Suppose that the linear equation $a_1x_1+a_2x_2+\cdots+a_qx_q=1$ does not have a solution $(x_1,x_2, \ldots,,x_q)\in\F^q$ with all $x_i$ distinct, and thus neither does the linear equation $a_1x_1+a_2x_2+\cdots+a_qx_q=b$ with $b\neq 0$ by Lemma \ref{lem:NFq}. This implies that $a_1x_1+a_2x_2+\cdots+a_qx_q=0$ for all ordered $q$-tuples $(x_1, x_2, \ldots, x_q)\in\F^q$ with $x_i$ distinct. Let $(x_1, x_2, \ldots, x_q)\in \F^q$ be an ordered $q$-tuple with all $x_i$ distinct (there exists such an ordered $q$-tuple since $|\F|=q$), we then have
\begin{equation}\label{eq:2a}
a_1x_1 + \cdots +a_ix_i +\cdots +a_jx_j +\cdots +a_kx_k =0
\end{equation}
Swapping the $i$-th and the $j$-th coordinates of $(x_1,x_2,\ldots,x_q)$, we obtain another ordered $q$-tuple with distinct coordinates and thus
\begin{equation}\label{eq:2b}
a_1x_1 + \cdots +a_ix_j +\cdots +a_jx_i +\cdots +a_kx_k =0
\end{equation}
Subtracting \eqref{eq:2b} from \eqref{eq:2a}, we get $(a_i-a_j)(x_i-x_j)=0$, which implies $a_i=a_j$ since $x_i\neq x_j$. Since $i,j$ are arbitrary, we conclude that all of the $a_i$ are equal if there does not exist distinct $x_i\in\F$ such that $a_1x_1+a_2x_2+\cdots+a_qx_q=1$. On the other hand, if $q\geq 3$ and all of the $a_i$ are equal, then  $a_1x_1+a_2x_2+\cdots+a_qx_q=0$ for all $(x_1,x_2, \ldots,,x_q)\in\F^q$ with $x_i$ distinct since the sum of all elements of $\F$ is zero except $\F$ being $\mathbb{F}_2$. Thus there does not exist distinct $x_i\in \F$ satisfying $a_1x_1+a_2x_2+\cdots+a_qx_q=1$ when $q\geq 3$ and all of the $a_i$ are equal. The proof is completed.
\end{proof}

Next we turn to the proof of Theorem \ref{thm:recurrence}, the recurrence relation of $N_{\F}(a_1,a_2, \ldots,a_k;b)$. The main idea is to introduce $N_{\F^*}(a_1,a_2, \ldots,a_k;b)$, the number of solutions $(x_1,x_2,\ldots,x_k)\in (\F^*)^k$ of the linear equation $a_1x_1+a_2x_2+\cdots+a_kx_k=b$ with all $x_i$ distinct, which is  related to $N_{\F}(a_1,a_2, \ldots,a_k;b)$ by the Lemma given below.

\begin{lemma}\label{lem:relation1}
	Let $A=\sum_{i=1}^k a_i$ and  $c\in\F$. Then we have
	\begin{equation}\label{eq:relation1}
	N_{\F}(a_1,a_2, \ldots,a_k;b)=N_{\F^*}(a_1,a_2, \ldots,a_k;b-Ac)+\sum_{i=1}^k N_{\F^*}(a_1, a_2, \ldots,a_{i-1},\widehat{a_i},a_{i+1},\dots,a_k;b-Ac),
	\end{equation}
	where the hat denotes the omission of an element. In particular, letting $(b,c)$ be $(1,0)$ and $(0,0)$, we obtain
	\begin{align}
	N_{\F}(a_1,a_2, \ldots,a_k;1)&=N_{\F^*}(a_1,a_2, \ldots,a_k;1)+\sum_{i=1}^k N_{\F^*}(a_1, a_2, \ldots,a_{i-1},\widehat{a_i},a_{i+1},\dots,a_k;1),\label{eq:relationa}\\
	N_{\F}(a_1,a_2, \ldots,a_k;0)&=N_{\F^*}(a_1,a_2, \ldots,a_k;0)+\sum_{i=1}^k N_{\F^*}(a_1,a_2, \ldots,a_{i-1},\widehat{a_i},a_{i+1},\dots,a_k;0).\label{eq:relationb}
	\end{align}
\end{lemma}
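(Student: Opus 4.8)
The plan is to establish the single identity \eqref{eq:relation1}; the two displayed special cases \eqref{eq:relationa} and \eqref{eq:relationb} then follow immediately by substituting $(b,c)=(1,0)$ and $(b,c)=(0,0)$, since $b-Ac$ equals $1$ and $0$ respectively in those instances.

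To prove \eqref{eq:relation1}, I would reparametrize and then partition according to which coordinate, if any, equals the fixed element $c$. First introduce the shift $y_i=x_i-c$ for $1\leq i\leq k$. This is a bijection of $\F^k$ onto itself that preserves pairwise distinctness of coordinates, sends each condition $x_i=c$ to $y_i=0$, and transforms the linear form via $\sum_{i=1}^k a_ix_i=\sum_{i=1}^k a_iy_i+Ac$, where $A=\sum_{i=1}^k a_i$. Consequently a distinct-coordinate tuple $(x_1,\ldots,x_k)$ solves $a_1x_1+\cdots+a_kx_k=b$ if and only if the distinct-coordinate tuple $(y_1,\ldots,y_k)$ solves $a_1y_1+\cdots+a_ky_k=b-Ac$. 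It therefore suffices to count the distinct-coordinate solutions $(y_1,\ldots,y_k)\in\F^k$ of the latter equation; this is essentially the same shift bijection already used in Lemma \ref{lem:NFq}.

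Next I would split the count according to whether some $y_i$ vanishes. Since the $y_i$ are pairwise distinct, at most one of them can equal $0$, so exactly one of two mutually exclusive situations occurs: either every $y_i\in\F^*$, or there is a unique index $j$ with $y_j=0$. The first situation contributes precisely $N_{\F^*}(a_1,\ldots,a_k;b-Ac)$. In the second, the term $a_jy_j$ drops out and deleting the zero coordinate $y_j$ leaves $k-1$ pairwise distinct nonzero elements with $\sum_{i\neq j}a_iy_i=b-Ac$, so the solutions with $y_j=0$ are counted by $N_{\F^*}(a_1,\ldots,a_{j-1},\widehat{a_j},a_{j+1},\ldots,a_k;b-Ac)$. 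Summing over the choice of $j$ produces the sum in \eqref{eq:relation1}, and adding the two cases gives the identity.

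There is essentially no serious obstacle here: the only point requiring care is that pairwise distinctness forces at most one coordinate to equal $c$, which is exactly what makes the decomposition a clean disjoint partition into two cases rather than an inclusion–exclusion over many coincidences. Verifying that the shift respects distinctness and correctly tracks the change $b\mapsto b-Ac$ is routine, as is checking that after deleting the zero coordinate the remaining tuple genuinely ranges over all distinct nonzero solutions of the reduced equation.
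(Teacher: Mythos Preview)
Your argument is correct and follows essentially the same approach as the paper: shift by $c$ via $y_i=x_i-c$ and partition the distinct-coordinate solutions according to whether some coordinate equals $c$ (equivalently, some $y_i=0$). The paper's proof is terser but identical in substance.
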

\begin{proof}
	Let $c$ be an element of $\F$. Then the solutions $(x_1,x_2,\ldots,x_k)\in\F^k$ of the linear equation $a_1x_1+a_2x_2+\cdots+a_kx_k=b$ with all $x_i$ distinct can be divided into two parts depending on whether $c$ appears. By the linear substitution $y_i=x_i-c$, $1\leq i\leq k$, the number of solutions in which $c$ does not appear is $N_{\F^*}(a_1,a_2, \ldots,a_k;b-Ac)$, and the number of solutions in which $c$ appears is $\sum_{i=1}^k N_{\F^*}(a_1, a_2, \ldots,a_{i-1},\widehat{a_i},a_{i+1},\dots,a_k;b-Ac)$. Thus \eqref{eq:relation1} follows.
\end{proof}

There is an additional relation between $N_{\F}(a_1,a_2, \ldots,a_k;b)$ and $N_{\F^*}(a_1,a_2, \ldots,a_k;b)$ when $\sum_{i=1}^k a_i=0$.
\begin{lemma}\label{lem:relation2}
	Suppose that $\sum_{i=1}^k a_i=0$. Then we have
	\begin{equation*}
	N_{\F}(a_1,a_2, \ldots,a_k;b)=qN_{\F^*}(a_1, a_2, \ldots,a_{i-1},\widehat{a_i},a_{i+1},\dots,a_k;b)
	\end{equation*}
	for  $1\leq i\leq k$ and $b\in\F$. In particular, $N_{\F^*}(a_1, a_2, \ldots,a_{i-1},\widehat{a_i},a_{i+1}\dots,a_k;b)$ are equal for all $1\leq i\leq k$.
\end{lemma}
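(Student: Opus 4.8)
The plan is to exploit the hypothesis $\sum_{i=1}^k a_i=0$, which makes the linear form $\sum_j a_jx_j$ invariant under the diagonal translation $(x_1,x_2,\ldots,x_k)\mapsto(x_1+c,x_2+c,\ldots,x_k+c)$ already used in Lemma~\ref{lem:NFq}. Fix an index $i$ with $1\leq i\leq k$. First I would consider the map $\phi$ that sends a tuple $(x_1,x_2,\ldots,x_k)$ to $(x_1-x_i,x_2-x_i,\ldots,x_k-x_i)$, i.e. translation by $c=-x_i$, which normalizes the $i$-th coordinate to zero. Because $\sum_j a_j=0$, one has $\sum_j a_j(x_j-x_i)=\sum_j a_jx_j-x_i\sum_j a_j=\sum_j a_jx_j$, so $\phi$ carries any distinct-coordinate solution of $a_1x_1+a_2x_2+\cdots+a_kx_k=b$ over $\F$ to a tuple $(y_1,y_2,\ldots,y_k)$ with the same value $b$ of the linear form, with $y_i=0$, and with the remaining $y_j$ distinct and nonzero (since the $x_j$ were distinct, $x_j-x_i\neq 0$ for $j\neq i$). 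Deleting the zero $i$-th entry, the image is precisely a distinct-coordinate solution over $\F^*$ of $\sum_{j\neq i}a_jy_j=b$, an object counted by $N_{\F^*}(a_1,a_2,\ldots,a_{i-1},\widehat{a_i},a_{i+1},\dots,a_k;b)$.

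Next I would establish that $\phi$ is $q$-to-one onto this set of normalized image tuples. Given such an image $(y_1,y_2,\ldots,y_k)$ with $y_i=0$ and the other $y_j\in\F^*$ distinct, its preimages under $\phi$ are exactly the tuples $x_j=t+y_j$, $1\leq j\leq k$, with $t=x_i$ ranging freely over $\F$. For each choice of $t$ the resulting $x_j$ are distinct (the $y_j$ being distinct) and satisfy the equation by the same translation invariance, so every fiber has exactly $q$ elements. Counting the domain of $\phi$ by its fibers over the image therefore yields
\begin{equation*}
N_{\F}(a_1,a_2,\ldots,a_k;b)=q\,N_{\F^*}(a_1,a_2,\ldots,a_{i-1},\widehat{a_i},a_{i+1},\dots,a_k;b),
\end{equation*}
which is the asserted identity. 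Since the left-hand side does not depend on $i$, the ``in particular'' conclusion that the numbers $N_{\F^*}(a_1,a_2,\ldots,a_{i-1},\widehat{a_i},a_{i+1},\dots,a_k;b)$ coincide for all $1\leq i\leq k$ is then immediate.

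I do not expect a serious obstacle here: the entire argument rests on the single observation that $\sum_j a_j=0$ renders the form translation-invariant. The only point demanding care is the bookkeeping of the fiber of $\phi$, namely verifying that the free parameter $t=x_i$ genuinely ranges over all of $\F$ and that distinctness and nonvanishing are preserved in both directions of the correspondence, so that the factor $q$ is correct and no solution is over- or under-counted.
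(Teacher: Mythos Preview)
Your proposal is correct and is essentially the same argument as the paper's: the paper uses the substitution $y_1=x_1$, $y_j=x_j-x_1$ for $j\geq 2$, so that $y_1$ is a free parameter in $\F$ and $(y_2,\dots,y_k)\in(\F^*)^{k-1}$ is a distinct-coordinate solution of $\sum_{j\geq 2}a_jy_j=b$, which is exactly your $q$-to-one translation map phrased as a bijection with one free coordinate.
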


\begin{proof}
   By the linear substitution $y_1=x_1$ and $y_i=x_i-x_1$, $2\leq i\leq k$ and the assumption that $\sum_{i}^k a_i=0$, the number of solutions $(x_1,x_2,\ldots,x_k)\in \F^k$ of $a_1x_1+a_2x_2+\cdots+a_kx_k=b$ with all $x_i$ distinct is equal to the number of solutions $(y_1,y_2\dots,y_k)\in \F\times (\F^*)^{k-1}$ of  $a_2y_2+ \dots+a_ny_n=b$  with $y_i$ distinct for $2\leq i\leq k$. Since $y_1\in \F$ can be arbitrarily chosen, we have
	\begin{equation*}
	N_{\F}(a_1,a_2, \ldots,a_k;b)=qN_{\F^*}(a_2,\dots,a_k;b)
	\end{equation*}
	The same argument gives
	\begin{equation*}
N_{\F}(a_1,a_2, \ldots,a_k;b)=qN_{\F^*}(a_1, a_2, \ldots,a_{i-1},\widehat{a_i},a_{i+1},\dots,a_k;b)
	\end{equation*}
	for all $1\leq i\leq k$. Hence the proof is completed.
\end{proof}

\begin{proof}[Proof of Theorem {\rm \ref{thm:recurrence}}]
 For an ordered $k$-tuple $(a_1,a_2, \ldots,a_k)\in\F^k$, define
\begin{equation}\label{eq:difference}
d(a_1,a_2, \ldots,a_k):=N_{\F^*}(a_1,a_2, \ldots,a_k;1)-N_{\F^*}(a_1,a_2, \ldots,a_k;0).
\end{equation}
Subtracting \eqref{eq:relationb} from \eqref{eq:relationa}, we obtain
\begin{equation}\label{eq:relationc}
N_{\F}(a_1,a_2, \ldots,a_k;1)-N_{\F}(a_1,a_2, \ldots,a_k;0)=d(a_1,a_2, \ldots,a_k)+\sum_{i=1}^k d(a_1, a_2, \ldots,a_{i-1},\widehat{a_i},a_{i+1},\dots,a_k).
\end{equation}
  It is direct to verify  that $d(a_1)=N_{\F^*}(a_1;1)-N_{\F^*}(a_1;0)$ equals $-(q-1)$ if $a_1=0$, and equals $1$ if $a_1\neq 0$. Thus $d(a_1)=-v(a_1)$.

	Now suppose $k\geq 2$. If $\sum_{i=1}^k a_i\neq 0$, then by Lemma \ref{lem:NFq} we have $N_{\F}(a_1,a_2, \ldots,a_k;1)=N_{\F}(a_1,a_2, \ldots,a_k;0)$. Thus the left-hand side of \eqref{eq:relationc} is zero, which implies
	\begin{equation*}
	d(a_1,a_2, \ldots,a_k)=-\sum_{i=1}^k d(a_1,a_2, \ldots, a_{i-1},\widehat{a_i},a_{i+1},\dots,a_k).
	\end{equation*}
	Now let us consider the case $\sum_{i=1}^k a_i=0$. In this case, Lemma \ref{lem:relation2} implies that the left-hand side of \eqref{eq:relationc} is equal to $qd(a_1,a_2, \ldots, a_{i-1},\widehat{a_i},a_{i+1},\dots,a_k)$.
	Again, by Lemma \ref{lem:relation2},  $d(a_1, a_2, \ldots,a_{i-1},\widehat{a_i},a_{i+1},\dots,a_k)$ are equal for $1\leq i\leq k$ when  $\sum_{i=1}^k a_i=0$. Thus the right-hand side \eqref{eq:relationc} can be simplified into
	$d(a_1,a_2, \ldots,a_k)+kd(a_1, a_2, \ldots,a_{i-1},\widehat{a_i},a_{i+1},\dots,a_k)$. Therefore equality \eqref{eq:relationc} yields
	\begin{equation*}
	d(a_1,a_2, \ldots,a_k)=(q-k)d(a_1, a_2, \ldots,a_{i-1},\widehat{a_i},a_{i+1},\dots,a_k).
	\end{equation*}
\end{proof}

\section{Li-Wan's new sieve and the summation expression}

In \cite{li2010}, J.  Li and D. Wan proposed a new sieve method for  distinct coordinate counting problems. We introduce it here briefly.

Let $D$ be a finite set. For a positive integer $k$, let $D^k=D\times D\times \cdots\times D$ be the $k$-fold Cartesian product of $D$ with itself. Let $X$ be a subset of $D^k$. Then every element $x\in X$ can written in an ordered $k$-tuple form $x=(x_1,x_2,\ldots,x_k)$ with $x_i\in D$, $1\leq i\leq k$. We are interested in the number of the elements in $X$ whose coordinates are distinct, that is, the cardinality of the set
\begin{equation}\label{def:overlineX}
\overline{X}=\{(x_1,x_2,\ldots,x_k)\in X:x_i\neq x_j,\ \forall\, i\neq j\}.
\end{equation}
Let $S_k$ denote the symmetric group on the set $\{1,\dots,k\}$. Given a permutation $\tau\in S_k$, we can write it as a product of disjoint cycles $\tau=C_1C_2\cdots C_{\ell(\tau)}$ uniquely apart from the order of the cycles, where $\ell(\tau)$ denotes the number of disjoint cycles of $\tau$. We define the {\it signature} of $\tau$ to be $\sign(\tau):=(-1)^{k-\ell(\tau)}$. We also define the set $X_\tau$ to be
\begin{equation}\label{def:Xtau}
X_\tau:=\{(x_1,x_2,\ldots,x_k)\in X:x_i\ \text{are equal for}\ i\in C_j,\ 1\leq j\leq \ell(\tau)\}
\end{equation}
We have the following theorem which will be used in the proof of the summation expression of $N_{\F}(a_1,a_2, \ldots,a_k;b)$.
\begin{theorem}[ \cite{li2010}, Theorem 1.1]\label{thm:sieve} %
	We have
	$$ |\overline{X}|=\sum_{\tau\in S_k}\sign(\tau)|X_\tau|.$$
\end{theorem}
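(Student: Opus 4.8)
The plan is to prove the identity by expanding the right-hand side as a double sum and interchanging the order of summation, so that the cancellation is governed entirely by the combinatorics of set partitions. First I would write
$$\sum_{\tau\in S_k}\sign(\tau)|X_\tau|=\sum_{\tau\in S_k}\sign(\tau)\sum_{x\in X}\mathbf{1}[x\in X_\tau]=\sum_{x\in X}\sum_{\tau\in S_k}\sign(\tau)\,\mathbf{1}[x\in X_\tau],$$
and observe that for a fixed $x=(x_1,\dots,x_k)\in X$ the membership condition $x\in X_\tau$ depends only on the \emph{equality pattern} of $x$. Precisely, let $P(x)$ be the set partition of $\{1,\dots,k\}$ whose blocks are the classes of the relation $i\sim j\iff x_i=x_j$. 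By the definition \eqref{def:Xtau}, $x\in X_\tau$ holds if and only if every cycle of $\tau$ is contained in a single block of $P(x)$; equivalently, $\tau$ fixes each block of $P(x)$ setwise, i.e.\ $\tau$ lies in the Young subgroup $S_{B_1}\times\cdots\times S_{B_m}$, where $B_1,\dots,B_m$ are the blocks of $P(x)$.

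The next step is to evaluate the inner sum over this Young subgroup. Writing $\tau=\tau_1\cdots\tau_m$ with $\tau_j\in S_{B_j}$, the number of cycles is additive, $\ell(\tau)=\sum_j\ell(\tau_j)$, and since $k=\sum_j|B_j|$ the signature factors as
$$\sign(\tau)=(-1)^{k-\ell(\tau)}=\prod_{j=1}^m(-1)^{|B_j|-\ell(\tau_j)}=\prod_{j=1}^m\sign(\tau_j),$$
where $\sign(\tau_j)$ is the ordinary signature of $\tau_j$ viewed as a permutation of the $|B_j|$-element set $B_j$. Consequently the inner sum factors as a product over the blocks,
$$\sum_{\tau\in S_{B_1}\times\cdots\times S_{B_m}}\sign(\tau)=\prod_{j=1}^m\Bigl(\sum_{\tau_j\in S_{B_j}}\sign(\tau_j)\Bigr),$$
and each factor vanishes whenever $|B_j|\geq 2$, because a symmetric group on at least two letters has equally many even and odd permutations, while a factor equals $1$ when $|B_j|=1$.

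Thus the product is $0$ unless every block of $P(x)$ is a singleton, and it equals $1$ exactly when $P(x)$ is the all-singletons partition — that is, exactly when the coordinates of $x$ are pairwise distinct, i.e.\ $x\in\overline{X}$. Substituting this back yields $\sum_{\tau}\sign(\tau)|X_\tau|=\sum_{x\in X}\mathbf{1}[x\in\overline{X}]=|\overline{X}|$, as claimed. I do not expect a serious obstacle here: the argument is a clean sign cancellation, and the only points that need care are the bookkeeping equivalence that $x\in X_\tau$ holds precisely when the cycle partition of $\tau$ refines $P(x)$, and the verification that the global signature $(-1)^{k-\ell(\tau)}$ really restricts to the genuine block signatures, so that the sum splits as a product across the blocks.
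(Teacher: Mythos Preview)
Your argument is correct: the interchange of summation, the identification of the set $\{\tau: x\in X_\tau\}$ with the Young subgroup $S_{B_1}\times\cdots\times S_{B_m}$ attached to the equality partition $P(x)$, the multiplicativity of $\sign$ across the blocks, and the vanishing of $\sum_{\sigma\in S_n}\sign(\sigma)$ for $n\ge 2$ are all valid and assembled cleanly.

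Note, however, that the present paper does not give its own proof of this statement; Theorem~\ref{thm:sieve} is quoted from \cite{li2010} and used as a black box. So there is no in-paper proof to compare against. Your sign-cancellation argument is in fact the standard one (and essentially the proof given in \cite{li2010}), so nothing further is needed here.
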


The unsigned Stirling number of the first kind $c(k,i)$ is defined to be the number of permutations in $S_k$ with exactly $i$ cycles. It can also be defined via the following classic identity \cite{stanley}:
\begin{equation}\label{eq:identity}
\sum_{i=0}^k (-1)^{k-i}c(k,i)q^i=(q)_k.
\end{equation}

With these preparations, we are ready to prove Theorem \ref{thm:summation}.
\begin{proof}[Proof of Theorem {\rm \ref{thm:summation}}]
	Let $X$ be the set of all solutions (not necessarily to have distinct coordinates) of the linear equation $a_1x_1+a_2x_2+\cdots+a_kx_k=b$ in $\F$, i.e.,
	$$ X=\{(x_1,x_2,\ldots,x_k)\in\F^k:a_1x_1+a_2x_2+\cdots+a_kx_k=b\}.$$
 	Since $N_{\F}(a_1,a_2, \ldots,a_k;b)$ counts the number of elements in $\overline{X}$, by Theorem \ref{thm:sieve} we have
	\begin{equation}\label{eq:NFq}
	N_{\F}(a_1,a_2, \ldots,a_k;b)=|\overline{X}|=\sum_{\tau\in S_k}(-1)^{\sign(\tau)}|X_\tau|,
	\end{equation} 	
	where  $\overline{X}$ and $X_\tau$ are defined as in \eqref{def:overlineX} and \eqref{def:Xtau}. Let $\tau=C_1C_2\cdots C_\ell$ be the disjoint cycle product of $\tau$. Let $A_j=\sum_{i\in C_j}a_i$, $1\leq j\leq \ell$. By the definition of $X_\tau$, we have
	$$X_\tau =\{(y_1,y_2, \ldots, y_\ell)\in \F^\ell:A_1y_1+A_2y_2+\cdots+A_\ell y_\ell=b\}.$$
	Hence  $|X_\tau|=q^{\ell-1}=q^{\ell(\tau)-1}$ if $A_1, A_2, \ldots,A_\ell$ are not all zero, and $|X_\tau|= q^\ell1_{b=0}=q^{\ell(\tau)-1}(v(b)+1)$ otherwise.
      Denote by $p(a_1,a_2, \ldots,a_k;k,i)$ the number of permutations in $S_k$ of $i$ cycles with the sum of $a_i$'s over its each cycle vanishing. We deduce from \eqref{eq:NFq} that
      	\begin{align*}
	N_{\F}(a_1,a_2, \ldots,a_k;b)&=\sum_{i=1}^k \sum_{\tau\in S_k:\ell(\tau)=i}(-1)^{k-i}|X_\tau|\\	
	&=\sum_{i=1}^k (-1)^{k-i}(c(k,i)-p(a_1,a_2, \ldots,a_k;k,i))q^{i-1}\\
	&\quad+\sum_{i=1}^k (-1)^{k-i}p(a_1,a_2, \ldots,a_k;k,i)q^{i-1}(v(b)+1)\\
	&=\frac{1}{q}\sum_{i=1}^k (-1)^{k-i}c(k,i)q^{i}+v(b)\sum_{i=1}^k (-1)^{k-i} p(a_1,a_2, \ldots,a_k;k,i)q^{i-1}\\
	&=\frac{1}{q}(q)_k+\frac{v(b)}{q}\sum_{i=1}^k (-1)^{k-i} p(a_1,a_2, \ldots,a_k;k,i)q^i.
	\end{align*}
	
In particular, if $\sum_{i=1}^k a_i=0$ but $\sum_{i\in I}a_i\neq 0$ for all $I\subsetneq \{1,\dots,k\}$, then $p(a_1,a_2, \ldots,a_k,k,i)=0$ for $i\geq 2$ and $p(a_1,a_2, \ldots,a_k,k,i)=(k-1)!$ for $i=1$. Thus  we conclude that
$$	N_{\F}(a_1,a_2, \ldots,a_k;b)=\frac{1}{q}(q)_k +v(b)(-1)^{k-1}(k-1)!$$
in this special case.
\end{proof}

\section{Proof of Theorem \ref{thm:special case}}

In this section, we prove Theorem \ref{thm:special case}. We first need some combinatorial formulas and equalities.

\begin{lemma}\label{lem:binomial}
	Let $k,n$ be integers. Then we have
	\begin{equation}\label{eq:binomial1}
	\sum_{j=0}^k (-1)^j\binom{n}{j} = (-1)^k\binom{n-1}{k}
	\end{equation}
	and
	\begin{equation}\label{eq:binomial2}
	\sum_{j=0}^k (-1)^j j\binom{n}{j} =(-1)^k n\binom{n-2}{k-1}.
	\end{equation}
\end{lemma}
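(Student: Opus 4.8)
The plan is to prove both binomial identities by standard manipulations, treating \eqref{eq:binomial1} first since \eqref{eq:binomial2} will follow from it. For \eqref{eq:binomial1}, the cleanest approach is induction on $k$, or equivalently a telescoping argument using the Pascal recurrence $\binom{n}{j}=\binom{n-1}{j}+\binom{n-1}{j-1}$. Concretely, I would substitute Pascal's rule into the alternating sum $\sum_{j=0}^k(-1)^j\binom{n}{j}$ and observe that the resulting sum telescopes: the terms $(-1)^j\binom{n-1}{j-1}$ cancel against $(-1)^{j+1}\binom{n-1}{j}$ from adjacent summands, leaving only the boundary term $(-1)^k\binom{n-1}{k}$. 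This is a short computation and should be presented as a one- or two-line derivation. Care must be taken with the edge conventions $\binom{n-1}{-1}=0$ so that the $j=0$ term is handled correctly.

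For \eqref{eq:binomial2}, the key observation is the absorption identity $j\binom{n}{j}=n\binom{n-1}{j-1}$. Applying this, the left-hand side becomes $n\sum_{j=1}^k(-1)^j\binom{n-1}{j-1}$, and after reindexing with $j'=j-1$ this is $-n\sum_{j'=0}^{k-1}(-1)^{j'}\binom{n-1}{j'}$. Now I simply invoke the already-proven identity \eqref{eq:binomial1} with $n$ replaced by $n-1$ and $k$ replaced by $k-1$, which evaluates the inner sum to $(-1)^{k-1}\binom{n-2}{k-1}$. Multiplying through, the two sign factors combine as $-n\cdot(-1)^{k-1}=(-1)^k n$, giving exactly $(-1)^k n\binom{n-2}{k-1}$ as claimed.

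There is essentially no serious obstacle here; these are textbook finite-difference identities. The only point requiring a little attention is uniformity of the statement across degenerate ranges of $k$ and $n$ (for instance $k=0$, or $n$ a nonpositive integer interpreted via the generalized binomial coefficient), since the lemma states the identities for all integers $k,n$. I would verify the base case $k=0$ directly for \eqref{eq:binomial1} (both sides equal $1$) and $k=0$ for \eqref{eq:binomial2} (both sides equal $0$, using the convention $\binom{n-2}{-1}=0$), and note that the Pascal-rule and absorption identities hold at the level of polynomial identities in $n$, so no separate argument is needed for negative or non-integer values of $n$. The whole proof should fit comfortably in under half a page.
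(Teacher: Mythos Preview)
Your proof is correct, but the paper takes a different, generating-function route. For \eqref{eq:binomial1} the paper simply compares the coefficient of $x^k$ on both sides of the formal identity $(1-x)^{-1}(1-x)^n=(1-x)^{n-1}$: the left side is a Cauchy product whose $x^k$-coefficient is exactly $\sum_{j=0}^k(-1)^j\binom{n}{j}$, and the right side gives $(-1)^k\binom{n-1}{k}$. For \eqref{eq:binomial2} the paper differentiates first, writing $(1-x)^{-1}\bigl((1-x)^n\bigr)'=-n(1-x)^{n-2}$ and comparing the coefficient of $x^{k-1}$. Your approach---telescoping via Pascal's rule for the first identity, then the absorption identity $j\binom{n}{j}=n\binom{n-1}{j-1}$ to reduce the second to the first---is more combinatorial and arguably more self-contained, since it never leaves the realm of finite sums and makes the dependence of \eqref{eq:binomial2} on \eqref{eq:binomial1} explicit. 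The generating-function argument, on the other hand, is a uniform one-line template for both identities and scales more readily if one needed higher moments $\sum(-1)^j j^m\binom{n}{j}$. Either is perfectly adequate here.
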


\begin{proof}
	Comparing the coefficients $x^k$ on both sides of the identity $(1-x)^{-1}(1-x)^n=(1-x)^{n-1}$, we obtain \eqref{eq:binomial1}. Similarly, comparing the coefficients of $x^{k-1}$ on both sides of the identity $(1-x)^{-1}((1-x)^n)'=-n(1-x)^{n-2}$, we obtain \eqref{eq:binomial2}.
\end{proof}

\begin{lemma}[\cite{li2010}, Lemma 3.1]\label{lem:p(k,i)}
	Assume $p\mid k$. Let $p(k,i)$ be the number of permutations in $S_k$ of $i$ cycles with the length of its each cycle divisible by $p$. Then
	$$\sum_{i=1}^k (-1)^i p(k,i)q^i =(-1)^{k/p}k!\binom{q/p}{k	/p}.$$
\end{lemma}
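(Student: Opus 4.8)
The plan is to encode the numbers $p(k,i)$ in a bivariate exponential generating function (EGF) and to read off the desired signed sum as a single coefficient; this is the $p$-analogue of the derivation of identity \eqref{eq:identity} from the EGF $(1-x)^{-u}$ for all permutations. Set
$$F(x,u)=\sum_{k\geq 0}\sum_{i\geq 0}p(k,i)\,u^i\frac{x^k}{k!}.$$
A permutation counted by $p(k,i)$ is a disjoint union of $i$ cycles whose lengths are all divisible by $p$, so the exponential formula applies with building blocks being single cycles of length a multiple of $p$. Since an $n$-cycle on a labelled set can be formed in $(n-1)!$ ways, the EGF of a single admissible cycle, with $u$ marking that it is one cycle, is
$$u\sum_{m\geq 1}(pm-1)!\frac{x^{pm}}{(pm)!}=u\sum_{m\geq 1}\frac{x^{pm}}{pm}=-\frac{u}{p}\log(1-x^p).$$

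First I would invoke the exponential formula to obtain $F(x,u)=\exp\!\left(-\frac{u}{p}\log(1-x^p)\right)=(1-x^p)^{-u/p}$. Next I would specialise $u\mapsto -q$, which converts $\sum_i p(k,i)u^i$ into $\sum_i(-1)^i p(k,i)q^i$ and yields the identity of formal power series
$$\sum_{k\geq 0}\left(\sum_{i}(-1)^i p(k,i)\,q^i\right)\frac{x^k}{k!}=(1-x^p)^{q/p}.$$
Finally I would extract the coefficient of $x^k$: expanding $(1-x^p)^{q/p}=\sum_{j\geq 0}(-1)^j\binom{q/p}{j}x^{pj}$ and using the hypothesis $p\mid k$ (so that the only surviving term is $j=k/p$), the coefficient of $x^k/k!$ equals $k!\,(-1)^{k/p}\binom{q/p}{k/p}$, which is exactly the claimed value.

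The only point requiring care is the formal status of the manipulation: one should treat $q$ as an indeterminate (equivalently keep $u$ formal and substitute $u=-q$ only at the end), so that $(1-x^p)^{\alpha}=\sum_{j}\binom{\alpha}{j}(-x^p)^j$ is used as a formal binomial identity valid for an arbitrary exponent $\alpha$, and the final comparison of coefficients of $x^k$ is an equality of polynomials in $q$. I do not anticipate a genuine obstacle: the computation is short, and the hypothesis $p\mid k$ is precisely what collapses the binomial expansion to a single surviving term.
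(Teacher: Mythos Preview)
Your argument is correct: the exponential formula gives $\sum_{k,i}p(k,i)u^i x^k/k!=(1-x^p)^{-u/p}$, and specialising $u=-q$ and extracting the coefficient of $x^k$ yields the stated identity as a polynomial identity in $q$.

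There is nothing to compare, however, because the paper does not prove this lemma at all: it is quoted verbatim as Lemma~3.1 of \cite{li2010} and used as a black box in the proof of Theorem~\ref{thm:special case}. Your EGF derivation is a clean self-contained proof and would serve well if one wanted to make the paper independent of that citation.
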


\begin{lemma}\label{lem:p(k,i,j)}
	Assume $p\mid (k-j)$. Let $p(k,i,j)$ be the number of permutations in $S_k$ of $i$ cycles with a cycle of length $j$ containing $\{1,2\}$ and the length of each remaining $(i-1)$ cycles divisible by $p$. Then
	$$ p(k,i,j)=(j-1)\frac{(k-2)!}{(k-j)!}p(k-j,i-1).$$
\end{lemma}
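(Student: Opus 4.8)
The plan is to prove this by a direct bijective/counting argument, decomposing any permutation counted by $p(k,i,j)$ according to the unique cycle that contains the pair $\{1,2\}$. By definition of $p(k,i,j)$, such a permutation $\tau\in S_k$ has exactly $i$ cycles, one distinguished cycle of length $j$ containing both $1$ and $2$, and $i-1$ further cycles each of length divisible by $p$. Since $1$ and $2$ lie in a common cycle, that cycle is canonically determined, so there is no ambiguity in identifying the ``distinguished'' cycle, and hence no overcounting.

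First I would build the distinguished cycle. Its length is $j$ and it must contain $1$ and $2$, so the remaining $j-2$ elements of this cycle are chosen from the other $k-2$ points, giving $\binom{k-2}{j-2}$ choices. Once the $j$ underlying elements are fixed, the number of distinct cyclic arrangements on them is $(j-1)!$, and every such arrangement automatically contains both $1$ and $2$. Next I would handle the complement: the $k-j$ points not in the distinguished cycle must be arranged into $i-1$ cycles, each of length divisible by $p$, and by definition the number of ways to do this is exactly $p(k-j,i-1)$. Here the hypothesis $p\mid(k-j)$ is precisely what makes such an arrangement possible (consistent with the convention that $p(k-j,i-1)=0$ otherwise).

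Multiplying the three independent counts yields
$$
p(k,i,j)=\binom{k-2}{j-2}\,(j-1)!\;p(k-j,i-1),
$$
and then I would simplify the prefactor using $\binom{k-2}{j-2}(j-1)!=\dfrac{(k-2)!}{(j-2)!(k-j)!}(j-1)!=(j-1)\dfrac{(k-2)!}{(k-j)!}$, since $(j-1)!/(j-2)!=j-1$. This gives the stated formula $p(k,i,j)=(j-1)\frac{(k-2)!}{(k-j)!}p(k-j,i-1)$.

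This argument is essentially routine; the only point requiring care is the verification that the decomposition is a genuine bijection, namely that the distinguished cycle is uniquely determined (which follows from $1$ and $2$ sharing a cycle) and that the arrangements of the distinguished cycle and of its complement are fully independent, so that no permutation is counted twice and each valid configuration arises exactly once. Once this is noted, the factorization of the count into the three factors above is immediate, and the algebraic simplification is the only remaining computation.
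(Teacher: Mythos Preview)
Your proof is correct and follows essentially the same approach as the paper: decompose $\tau$ into the distinguished $j$-cycle containing $\{1,2\}$ and the remaining permutation on $k-j$ points, count each piece separately as $\binom{k-2}{j-2}(j-1)!$ and $p(k-j,i-1)$, and simplify. The paper's argument is identical in structure, only slightly more terse about the bijectivity check you spelled out.
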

\begin{proof}
	Let $\tau\in S_k$ be a cycle described in the Lemma. We can write $\tau$ as a product of two permutations $\tau=\tau_1\tau_2$, where $\tau_1$ denotes the cycle of $\tau$ of length $j$  containing  $\{1,2\}$, and $\tau_2$ denotes the product of the other $(i-1)$ cycles of $\tau$.
	
	Since the $j$-cycle $\tau_1$ contains $\{1,2\}$ already, the remaining $(j-2)$ elements of $\tau_1$ must come from the set $\{3,\dots,k\}$ and thus there are $\binom{k-2}{j-2}$ choices of them. The number of $j$-cycles on a $j$-element set is $(j-1)!$, so there are $(j-1)!\binom{k-2}{j-2}$ ways to determine $\tau_1$ by the multiplication principle. But $\tau_2$ can be viewed as a permutation in $S_{k-j}$ of $(i-1)$ cycles such that the length of each its cycle is divisible by $p$, so there are $p(k-j,i-1)$ choices of $\tau_2$ by Lemma \ref{lem:p(k,i)}.  Since every permutation can be expressed by a product of disjoint cycles uniquely up to the order of the cycles, we see that every ordered pair $(\tau_1,\tau_2)$ uniquely corresponds to a  $\tau$. Therefore there are	 $$(j-1)!\binom{k-2}{j-2}p(k-j,i-1)=(j-1)\frac{(k-2)!}{(k-j)!}p(k-j,i-1)$$ such $\tau$'s in total. The claim then follows.
\end{proof}

\begin{lemma}\label{lem:p(k,i,j1,j2)}
		Assume $p\mid (k-j_1-j_2)$. Let $p(k,i,j_1,j_2)$ be the number of permutations in $S_k$ of $i$ cycles with a cycle  of length $j_1$  containing $\{1\}$ but not containing $\{2\}$, a cycle of length $j_2$  containing $\{2\}$ but not containing  $\{1\}$,  and the length of each remaining  $(i-2)$ cycles divisible by $p$. Then we have
	$$ p(k,i,j_1,j_2)=\frac{(k-2)!}{(k-j_1-j_2)!}p(k-j_1-j_2,i-2).$$
\end{lemma}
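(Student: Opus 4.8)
The plan is to mimic the decomposition argument used for Lemma \ref{lem:p(k,i,j)}, now splitting a qualifying permutation $\tau \in S_k$ into three factors instead of two. Since $1$ and $2$ lie in different cycles by hypothesis, every such $\tau$ factors uniquely as $\tau = \tau_1 \tau_2 \tau_3$, where $\tau_1$ is the $j_1$-cycle containing $1$ (but not $2$), $\tau_2$ is the $j_2$-cycle containing $2$ (but not $1$), and $\tau_3$ is the product of the remaining $(i-2)$ cycles, each of length divisible by $p$. The uniqueness of this factorization is immediate: the cycle containing $1$ must be $\tau_1$ and the cycle containing $2$ must be $\tau_2$, and these are genuinely distinct even when $j_1 = j_2$, because they are distinguished by their anchor element rather than by their length.

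First I would count the choices for the pair $(\tau_1, \tau_2)$. The cycle $\tau_1$ already contains $1$, so its remaining $(j_1 - 1)$ elements must be drawn from $\{3, \ldots, k\}$, and likewise $\tau_2$ contains $2$ and needs $(j_2 - 1)$ further elements from the same $(k-2)$-element set, disjoint from those already chosen for $\tau_1$. Thus there are $\binom{k-2}{j_1-1}\binom{k-j_1-1}{j_2-1}$ ways to assign the underlying element sets, and then $(j_1-1)!$ and $(j_2-1)!$ ways to arrange each set into a cycle anchored at $1$ and $2$ respectively. The factor $\tau_3$ may be regarded as an arbitrary permutation of the remaining $(k-j_1-j_2)$ symbols into $(i-2)$ cycles of $p$-divisible length; since the hypothesis $p \mid (k-j_1-j_2)$ is exactly what makes the count applicable, this contributes $p(k-j_1-j_2, i-2)$ choices by the definition underlying Lemma \ref{lem:p(k,i)}.

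By the multiplication principle the total number of such $\tau$ is
\begin{equation*}
\binom{k-2}{j_1-1}\binom{k-j_1-1}{j_2-1}(j_1-1)!\,(j_2-1)!\,p(k-j_1-j_2, i-2),
\end{equation*}
and the last step is the routine algebraic simplification. Using $\binom{k-2}{j_1-1}(j_1-1)! = \frac{(k-2)!}{(k-j_1-1)!}$ together with $\binom{k-j_1-1}{j_2-1}(j_2-1)! = \frac{(k-j_1-1)!}{(k-j_1-j_2)!}$, the intermediate factorials telescope to yield $\frac{(k-2)!}{(k-j_1-j_2)!}\,p(k-j_1-j_2, i-2)$, as claimed. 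The only point requiring genuine care, and hence the mild main obstacle, is the bookkeeping in the two-stage element selection: one must draw the $\tau_2$-elements from the $(k-j_1-1)$ symbols that remain after $\tau_1$ has been populated, so that $\tau_1$ and $\tau_2$ stay disjoint and no configuration is counted twice.
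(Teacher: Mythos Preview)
Your proof is correct and follows essentially the same decomposition $\tau=\tau_1\tau_2\tau_3$ as the paper; the only cosmetic difference is that the paper selects the $(j_1+j_2-2)$ non-anchor elements in one step via $\binom{k-2}{j_1+j_2-2}\binom{j_1+j_2-2}{j_1-1,\,j_2-1}$ and then distributes them, whereas you select them sequentially via $\binom{k-2}{j_1-1}\binom{k-j_1-1}{j_2-1}$. Both expressions simplify to $\frac{(k-2)!}{(k-j_1-j_2)!}$, so the arguments are interchangeable.
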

\begin{proof}
	Let $\tau$ be a cycle described in the Lemma. We can write $\tau$ as a product of three permutations $\tau=\tau_1\tau_2\tau_3$, where $\tau_1$ denotes the cycle of $\tau$ of length $j_1$ containing $\{1\}$ but  not containing $\{2\}$,  $\tau_2$ denotes the cycle of $\tau$ of length $j_2$ containing $\{2\}$ but not containing $\{1\}$, and $\tau_3$ denotes the product of the other $(i-2)$ cycles of $\tau$. By a similar argument used in the proof of Lemma \ref{lem:p(k,i,j)},  we conclude that there are
	$$(j_1-1)!(j_2-1)!\binom{k-2}{j_1+j_2-2}\binom{j_1+j_2-2}{j_1-1,j_2-1}p(k-j_1-j_2,i-2)=\frac{(k-2)!}{(k-j_1-j_2)!}p(k-j_1-j_2,i-2)$$
such $\tau$'s in total. The claim then follows  	
\end{proof}

\begin{proof}[Proof of Theorem {\rm \ref{thm:special case}}]
	By Corollary \ref{cor:NFq}, it suffices to consider the case $a_1+a_2+k-2=0$. So assume $a_1+a_2+k-2=0$. In particular,  both $a_1$ and $a_2$ lie in $\mathbb{F}_p$, or neither $a_1$ nor $a_2$ lies in $\mathbb{F}_p$. Let $X$ denote the set of all solutions of the linear equation $a_1x_1+a_2x_2+x_3+\cdots+x_k=b$ in $\F$, i.e.,
	$$ X=\{(x_1,x_2,\ldots,x_k)\in \F^k:a_1x_1+a_2x_2+x_3+\cdots+x_k=b\}.$$
	Then $N_{\F}(a_1,a_2,\underbrace{1,\dots,1}_{k-2};b)$ counts the number of elements in $\overline{X}$.  We have from Theorem \ref{thm:sieve} that
	\begin{equation}\label{eq:NFq4} N_{\F}(a_1,a_2,\underbrace{1,\dots,1}_{k-2};b)=|\overline{X}|=\sum_{\tau\in S_k} \sign(\tau)|X_\tau|.
	\end{equation}
 For a permutation $\tau\in S_k$, write it as a disjoint cycle product $\tau=C_1C_2\cdots C_\ell$. Then we have two cases: $\{1\}$ and $\{2\}$ are contained in one cycle of $\tau$, or they are contained in two separate cycles of $\tau$, respectively. For the former case,  we may assume that $\{1,2\}$ is contained in the cycle $C_1$ after rearranging the cycles. Then we have
$$X_\tau =\{(y_1, y_2, \ldots,y_\ell)\in \F^\ell: (a_1+a_2-2+c_1) y_1+c_2y_2+\cdots+c_\ell y_\ell=b\}.$$	
where $c_i$ denotes the length of the cycle $C_i$, $1\leq i\leq \ell$. Thus $|X_\tau|=q^{\ell}1_{b=0}=q^{\ell(\tau)-1}(v(b)+1)$ if $ c_1 \equiv 2-a_1-a_2 \equiv k\pmod{p}$ and $p\mid c_i$ for $2\leq i\leq\ell$, and $|X_{\tau}|=q^{\ell-1}$ otherwise. For the latter case, we may assume that $\{1\}$ is contained in the cycle $C_1$ and $\{2\}$ is contained in the cycle $C_2$ after rearranging the cycles. Similarly, we have
$$X_\tau =\{(y_1, y_2, \ldots,y_\ell)\in \F^\ell: (a_1-1+c_1) y_1+(a_2-1+c_2)y_2+c_3y_3+\cdots+c_\ell y_\ell=b\}.$$
Thus $|X_\tau|=q^{\ell-1}(v(b)+1)$ if $ c_1 \equiv 1-a_1\pmod{p}$, $c_2\equiv 1-a_2\pmod{p}$ and $p\mid c_i$ for $3\leq i\leq \ell$, and $|X_\tau|=q^{\ell-1}$ otherwise. From this classification of $|X_\tau|$, we see that \eqref{eq:NFq4} can be simplified into
\begin{align*}
	N_{\F}(a_1,a_2,\underbrace{1,\dots,1}_{k-2};b)&=\sum_{i=1}^k\sum_{\tau\in S_k:\ell(\tau)=i} (-1)^{k-i}|X_\tau|\\
	& =\frac{1}{q}(q)_k +v(b) (S_1+S_2),
\end{align*}
where
\begin{align*}
S_1 &=\sum_{i=1}^k (-1)^{k-i}\sum_{\substack{2\leq j\leq k\\ j\equiv k\ ({\rm mod}\ p)}}p(k,i,j) q^{i-1},\\
S_2 &=\sum_{i=1}^k (-1)^{k-i}\sum_{\substack{1\leq j_1,j_2\leq k\\ j_1+j_2\leq k\\ j_1\equiv 1-a_1\ ({\rm mod}\ p) \\ j_2\equiv 1-a_2\ ({\rm mod}\ p)}} p(k,i,j_1,j_2) q^{i-1}.
\end{align*}

We first consider the case $a_1,a_2\notin \mathbb{F}_p$ in which it suffices to evaluate the sum $S_1$ since the sum $S_2$ vanishes.
Applying Lemma \ref{lem:p(k,i,j)} and Lemma \ref{lem:p(k,i)}, we see that
\begin{align*}
S_1 & =\sum_{i=1}^k(-1)^{k-i}\sum_{\substack{2\leq j\leq k\\ j\equiv k\ ({\rm mod}\ p)}}(j-1)\frac{(k-2)!}{(k-j)!}p(k-j,i-1) q^{i-1}\\
&=(-1)^{k-1}(k-2)! \sum_{\substack{2\leq j\leq k\\ j\equiv k\ ({\rm mod}\ p)}} (j-1)(-1)^{\frac{k-j}{p}}\binom{q/p}{(k-j)/p}\\
& =(-1)^{k-1}(k-2)! \sum_{0\leq \ell\leq \lfloor\frac{k-2}{p}\rfloor} (k-1-p\ell)(-1)^{\ell}\binom{q/p}{\ell}
\end{align*}
We can evaluate the above sum by using Lemma \ref{lem:binomial} and thus obtain
\begin{align*}
S_1 &=(-1)^{k-1+\lfloor\frac{k-2}{p}\rfloor}(k-2)!\left((k-1)\binom{q/p-1}{\lfloor (k-2)/p\rfloor}-q\binom{q/p-2}{\lfloor (k-2)/p\rfloor-1}\right)\\
&=(-1)^{k-1+\lfloor\frac{k-2}{p}\rfloor}(k-2)!\left(\frac{q\{ k-2\}_p -p(k-2)}{q-p}+1\right)\binom{q/p-1}{\lfloor (k-2)/p\rfloor}.
\end{align*}

For the case $a_1,a_2\in \mathbb{F}_p$, we have to consider the sum $S_2$. Similarly, using Lemma \ref{lem:p(k,i,j1,j2)} and Lemma \ref{lem:p(k,i)}, we see that
\begin{align*}
S_2&=\sum_{i=1}^k (-1)^{k-i}\sum_{\substack{1\leq j_1,j_2\leq k\\ j_1+j_2\leq k\\ j_1\equiv 1-a_1\ ({\rm mod}\ p) \\ j_2\equiv 1-a_2\ ({\rm mod}\ p)}} \frac{(k-2)!}{(k-j_1-j_2)!}p(k-j_1-j_2,i-2) q^{i-1}\\
&=(-1)^{k-2}q(k-2)!\sum_{\substack{1\leq j_1,j_2\leq k\\ j_1+j_2\leq k\\ j_1\equiv 1-a_1\ ({\rm mod}\ p) \\ j_2\equiv 1-a_2\ ({\rm mod}\ p)}} (-1)^{\frac{k-j_1-j_2}{p}}\binom{q/p}{(k-j_1-j_2)/p}\\
&=(-1)^{k-2}q(k-2)! \sum_{\substack{2\leq j\leq k\\ j\equiv k \ ({\rm mod}\ p)}}  (-1)^{\frac{k-j}{p}}N_j\binom{q/p}{(k-j)/p},
\end{align*}
where $N_j$ is defined as
$$N_j:=\# \{(j_1,j_2):j_1+j_2=j,\ 1\leq j_1, j_2\leq k,\ j_1\equiv 1-a_1\ ({\rm mod}\ p),\ j_2\equiv 1-a_2\ ({\rm mod}\ p) \}.$$
For $j\equiv k\pmod{p}$, it is direct to check that
$$ N_j=\begin{cases}
\lfloor (j/p\rfloor -1)1_{\lfloor j/p\rfloor\geq 1}, & \text{if}\ a_1=1, a_2=1;\\
\lfloor j/p \rfloor, & \text{if}\ a_1=1, a_2\neq 1\ \text{or}\ a_1\neq 1, a_2=1\ \text{or}\ \{ 1-a_1\}_p+ \{ 1-a_2\}_p\geq p; \\
\lfloor j/p \rfloor+1, & \text{if}\ a_1\neq 1, a_2\neq 1\ \text{and}\ \{1-a_1\}_p+ \{1-a_2\}_p<p.\\
\end{cases}$$
Inserting $N_j$ into the sum of $S_2$, by a routine computation and Lemma \ref{lem:binomial}, we obtain
\begin{align*}
S_2 = (-1)^{k-2+\lfloor\frac{k-2}{p}\rfloor}q(k-2)! \left( (\lfloor k/p\rfloor+\varepsilon)\binom{q/p-1}{\lfloor (k-2)/p\rfloor}-q/p\binom{q/p-2}{\lfloor (k-2)/p\rfloor-1} \right),
\end{align*}
where $\varepsilon$ equals $-1$, $0$, $1$ in the three cases, respectively. Again,  a direct computation shows that $S_1+S_2$ takes the form 
\begin{align*}
S_1+S_2 =(-1)^{k-1+\lfloor\frac{k-1}{p}\rfloor}(k-2)!(k-1-q1_A)\binom{q/p-1}{\lfloor (k-1)/p\rfloor},
\end{align*}
where $A=\{(a_1,a_2)\in\mathbb{F}_p^2: a_1\neq 1,\ a_2\neq 1\ \text{and}\  \{ 1-a_1\}_p +\{ 1-a_2\}_p\leq p\}$. The proof is then completed
\end{proof}

\section{Proof of Theorem \ref{thm:bibak}}
For the purpose of our proof, we need the following result of D. N. Lehmer \cite{lehmer} which gives the number of solutions of linear congruence.

\begin{proposition}[\cite{lehmer}]\label{prop:lehmer}
	Let $a_1, a_2, \ldots,a_k,b,n\in\Z$, $n\geq 1$. The linear congruence $a_1x_1+a_2x_2+\cdots+a_kx_k\equiv b \pmod{n}$ has a solution $(x_1,x_2,\ldots,x_k)\in (\Z/n\Z)^k$ if and only if $d\mid b$, where $d=\gcd(a_1,a_2, \ldots,a_k,n)$. Furthermore, if this conditions is satisfied, then there are $dn^{k-1}$ solutions.
\end{proposition}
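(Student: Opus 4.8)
The plan is to recognize the left-hand side of the congruence as a group homomorphism and then invoke the first isomorphism theorem, so that both the solvability criterion and the count fall out simultaneously. Concretely, I would consider the map $\phi\colon(\Z/n\Z)^k\to\Z/n\Z$ defined by $\phi(x_1,\ldots,x_k)=a_1x_1+\cdots+a_kx_k$. Since each coordinate map $x_i\mapsto a_ix_i$ is additive, $\phi$ is a homomorphism of finite abelian groups, and the congruence $a_1x_1+\cdots+a_kx_k\equiv b\pmod n$ is solvable in $(\Z/n\Z)^k$ precisely when $b\bmod n$ lies in $\operatorname{Im}\phi$.

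The key step is to identify the image. As the $x_i$ range over all integers, $\sum_i a_ix_i$ ranges over all integer linear combinations of $a_1,\ldots,a_k$, that is, over the multiples of $\gcd(a_1,\ldots,a_k)$; reducing modulo $n$ then shows that $\operatorname{Im}\phi$ is the cyclic subgroup of $\Z/n\Z$ generated by $d=\gcd(a_1,\ldots,a_k,n)$. Because $d\mid n$, this subgroup $\langle d\rangle$ has order $n/d$, and $b\bmod n\in\langle d\rangle$ holds if and only if $d\mid b$. This gives the solvability criterion.

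For the count, I would use that every nonempty fiber $\phi^{-1}(\bar b)$ is a coset of $\ker\phi$ and hence has exactly $|\ker\phi|$ elements. By the first isomorphism theorem, $|\ker\phi|=|(\Z/n\Z)^k|/|\operatorname{Im}\phi|=n^k/(n/d)=dn^{k-1}$. Thus when $d\mid b$ the congruence has exactly $dn^{k-1}$ solutions, which completes the proof.

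I expect no serious obstacle here, as this is a classical fact; the only point requiring care is the correct computation of $\operatorname{Im}\phi$, and in particular the observation that passing from the integer gcd $\gcd(a_1,\ldots,a_k)$ to the modular picture forces the extra factor of $n$ into the gcd, yielding $d=\gcd(a_1,\ldots,a_k,n)$ rather than $\gcd(a_1,\ldots,a_k)$. Alternatively, one could give a hands-on argument by induction on $k$ using the single-variable theory (that $ax\equiv c\pmod n$ has $\gcd(a,n)$ solutions when $\gcd(a,n)\mid c$), but the homomorphism argument is cleaner and avoids the bookkeeping.
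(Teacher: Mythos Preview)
Your argument is correct and cleanly written: the identification of $\operatorname{Im}\phi$ with the cyclic subgroup generated by $d=\gcd(a_1,\ldots,a_k,n)$ is the crux, and once that is in hand both the solvability criterion and the fiber count follow immediately from the first isomorphism theorem, exactly as you say.

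There is nothing to compare against, however, because the paper does not supply its own proof of this proposition. It is quoted as a classical result of D.~N.~Lehmer (cited as \cite{lehmer}) and used as a black box in the proof of Theorem~\ref{thm:bibak}. So your write-up stands on its own as a self-contained justification of a fact the paper merely cites; the homomorphism approach you chose is the standard modern treatment and is arguably more transparent than the inductive single-variable argument you mention as an alternative.
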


Now we give another proof of Theorem \ref{thm:bibak} via a sieve method.

\begin{proof}[Proof of Theorem {\rm \ref{thm:bibak}}]
	Let $X$ be of set of all solutions of the linear congruence $a_1x_1+a_2x_2+\cdots+a_kx_k\equiv b\pmod{n}$ in $\Z/n\Z$, i.e.,
	$$ X=\{(x_1,x_2,\ldots,x_k)\in (\Z/n\Z)^k:a_1x_1+a_2x_2+\dots+a_kx_k\equiv b\pmod{n}\}.$$
	Then $N_{\Z/n\Z}(a_1,a_2, \ldots,a_k;b)$ counts the number of elements in $\overline{X}$.	Thus Theorem \ref{thm:sieve} yields
	\begin{equation}\label{eq:NZn}
	N_{\Z/n\Z}(a_1,a_2, \ldots,a_k;b) =|\overline{X}|=\sum_{\tau\in S_k}(-1)^{\sign(\tau)}|X_\tau|.
	\end{equation}

	Next we compute $|X_\tau|$. Let $\tau=C_1C_2\cdots C_\ell$ be a disjoint cycle product of $\tau$, and let $A_j=\sum_{i\in C_j}a_i, 1\leq j\leq\ell$. From the definition of $X_\tau$, we see that
	$$X_\tau=\{(y_1, y_2, \ldots,y_\ell)\in (\Z/n\Z)^\ell:A_1y_1+A_2y_2+\cdots+A_\ell y_\ell \equiv b\pmod{n}\}.$$
	Since $\gcd(\sum_{i\in I}a_i,n)=1$ for all $I\subsetneq \{1,\dots,k\}$,  by Proposition \ref{prop:lehmer}, we have $|X_\tau|=n^{\ell-1}=n^{\ell(\tau)-1}$ for all $\tau\in S_k$ with $\ell(\tau) \geq 2$ . Note that Proposition  \ref{prop:lehmer} also shows that
	$$|X_\tau|=\begin{cases}
	0, & \text{if} \ \gcd(\sum_{i=1}^k a_i,n)\nmid b;\\
	\gcd(\sum_{i=1}^k a_i,n), & \text{if}\ \gcd(\sum_{i=1}^k a_i,n)\mid b
	\end{cases}$$ for $\tau\in S_k$ with $\ell(\tau)=1$.
	Substituting this into \eqref{eq:NZn}, when $\gcd(\sum_{i=1}^k a_i,n)\nmid b$, we obtain
	\begin{align*}
	N_{\Z/n\Z}(a_1,a_2, \ldots,a_k;b)&=\sum_{i=1}^k\sum_{\tau\in S_k:\ell(\tau)=i}(-1)^{k-i}|X_\tau|\\
	 &= \sum_{i=2}^k (-1)^{k-i} c(k,i) n^{i-1} \\
	&=\frac{1}{n}\sum_{i=1}^k (-1)^{k-i}c(k,i)n^i -(-1)^{k-1}c(k,1)\\
	& =\frac{1}{n}(n)_k + (-1)^{k}(k-1)!
	\end{align*}
	And when $\gcd(\sum_{i=1}^k a_i,n)\mid b$, we obtain
	\begin{align*}
	N_{\Z/n\Z}(a_1,a_2, \ldots,a_k;b) &= \sum_{i=2}^k (-1)^{k-i} c(k,i) n^{i-1} + (-1)^{k-1}c(k,1)\gcd(\sum_{i=1}^k a_i,n) \\
	& =\frac{1}{n}(n)_k + (-1)^{k-1}(k-1)!\Big(\gcd(\sum_{i=1}^k a_k,n)-1\Big).
	\end{align*}
	This ends the proof
\end{proof}

\section*{Acknowledgements}
Part of the work was done when the second author was a graduate student  at Shanghai Jiao Tong University.

\end{document}